\renewcommand{\Vec}{\mathbf{Vec}}
\newcommand{\EE}{\mathbb{E}}
\newcommand{\Sh}{\mathrm{Sh}}
\newcommand{\omin}{\ominus}
\newcommand{\comment}[1]{}
\newcommand{\FI}{\mathbf{FI}}
\newcommand{\ab}{c} 
\begin{document}

\title{A tensor restriction theorem over finite fields}

\author{Andreas Blatter} 
\address{Mathematical Institute, University of Bern,
Alpeneggstrasse 22, 3012 Bern, Switzerland}
\email{andreas.blatter@unibe.ch}

\author{Jan Draisma} 
\address{Mathematial Institute, University of Bern, Sidlerstrasse 5, 3012
Bern, Switzerland; and Department of Mathematics and Computer Science,
P.O. Box 513, 5600 MB, Eindhoven, the Netherlands}
\email{jan.draisma@unibe.ch}

\author{Filip Rupniewski}
\address{Mathematical Institute, University of Bern,
Alpeneggstrasse 22, 3012 Bern, Switzerland}
\email{filip.rupniewski@unibe.ch}

\thanks{AB and FR are supported by Swiss National Science Foundation
(SNSF) project grant 200021\_191981, and JD is partially supported by
that grant and partially supported by Vici grant 639.033.514 from the
Netherlands Organisation for Scientific Research (NWO). JD thanks the
Institute for Advanced Study for the excellent working conditions,
under which part of this project was carried out.}

\maketitle

\begin{abstract}
Restriction is a natural quasi-order on $d$-way tensors. We establish
a remarkable aspect of this quasi-order in the case of tensors over a
fixed finite field---namely, that it is a well-quasi-order: it admits no
infinite antichains and no infinite strictly decreasing sequences. This
result, reminiscent of the graph minor theorem, has important consequences
for an arbitrary restriction-closed tensor property $X$. For instance,
$X$ admits a characterisation by finitely many forbidden restrictions
and can be tested by looking at subtensors of a fixed size.  Our
proof involves an induction over polynomial generic representations,
establishes a generalisation of the tensor restriction theorem to other
such representations (e.g.~homogeneous polynomials of a fixed degree),
and also describes the coarse structure of any 
restriction-closed property. 
\end{abstract}

\section{Introduction and results}

\subsection{Tensor restriction}

Let $K$ be a finite field and let $d$ be a natural number. This paper
concerns properties of $d$-way tensors that are preserved under taking
linear maps. For a vector space $V$ over $K$ we denote by $V^{\otimes d}$ 
the $d$-fold tensor product $V \otimes V \otimes \cdots \otimes V$ over
$K$, and for a linear map $\phi:V \to W$ we denote by $\phi^{\otimes
d}:V^{\otimes d} \to W^{\otimes d}$ the linear map determined by
\[ \phi^{\otimes d}(v_1 \otimes \cdots \otimes
v_d):=\phi(v_1) \otimes \cdots \otimes \phi(v_d). \]

\begin{de}
Let $V,W$ be finite-dimensional vector spaces over $K$ and let $S
\in V^{\otimes d}$ and $T \in W^{\otimes d}$. We call $T$ a {\em
restriction} of $S$ if there exists a linear map $\phi:V \to W$ such that
$\phi^{\otimes d} S = T$. We then write $S \succeq T$.
\end{de}

The rationale for this terminology is that $S$ can be thought of
as a multilinear map $(V^*)^d \to K$, and composing this 
map with $(\phi^*)^d:(W^*)^d \to (V^*)^d$ gives the multilinear map
$T$. In particular, if $\phi^*$ is injective, so that we can use it to
identify $W^*$ with a subspace of $V^*$, then we can think of $T$ as
the restriction of $S$ to the subspace $(W^*)^d$. 

\begin{re} \label{re:Tensors}
Much literature on tensors considers tensor products $V_1 \otimes \cdots
\otimes V_d$ of different vector spaces $V_i$, and for restriction
allows the application of distinct linear maps $\phi_i: V_i \to W_i$
to the individual factors. The theorems that we will prove imply the
corresponding theorems for this setting; see
Remark~\ref{re:Multivariate}.
\end{re}

\subsection{The tensor restriction theorem over finite fields}

The relation $\succeq$ is reflexive and transitive, so it
is a quasi-order on tensors over $K$. We will prove that 
this quasi-order is a well-quasi-order.

\begin{thm}[Tensor restriction theorem]
\label{thm:TensorRestriction}
Fix a natural number $d$. For every $i \in \NN$ let $V_i$ be a
finite-dimensional vector space over the finite field $K$ and let $T_i
\in V_i^{\otimes d}$. Then there exist $i<j$ such that $T_j \succeq T_i$.
\end{thm}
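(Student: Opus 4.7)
The plan is to prove by induction a more general statement: for every polynomial functor $P\colon \Vec_K \to \Vec_K$, the restriction quasi-order on $\bigsqcup_V P(V)$---with $s \succeq t$ meaning $t = P(\phi)(s)$ for some linear map $\phi$---is a well-quasi-order. Theorem~\ref{thm:TensorRestriction} is the special case $P(V) = V^{\otimes d}$, and this matches the abstract's advertisement of ``an induction over polynomial generic representations''.

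I would order polynomial functors by some complexity measure---for instance lexicographically on the list of multiplicities of the irreducible composition factors ordered by degree---so that proper subfunctors, proper quotients, and finite sums of strictly smaller functors are all strictly smaller. The base case $P = 0$ is vacuous, and a constant functor is trivial because $K$ is finite.

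For the inductive step, suppose for contradiction there is a bad sequence $T_1, T_2, \ldots$ with $T_i \in P(V_i)$ and $T_j \not\succeq T_i$ whenever $i < j$. Fix $T_1$. For every $j \geq 2$, the tensor $T_j$ lies in the restriction-closed collection
\[
X := \bigsqcup_{V} \{T \in P(V) : T \not\succeq T_1\}.
\]
The key structural lemma to establish is: \emph{there exist finitely many morphisms $f_\alpha\colon Q_\alpha \to P$ of polynomial functors, with each $Q_\alpha$ of strictly smaller complexity than $P$, whose images jointly cover $X$.} Over an algebraically closed field this is an instance of $\mathrm{GL}$-Noetherianity, or the shift theorem for polynomial functors; the main technical work will be to adapt those geometric arguments to finite $K$, keeping careful track of $K$-rational parametrisations throughout the stratification.

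Granting the lemma, a pigeonhole argument extracts an infinite subsequence of the $T_j$ and a single $\alpha$ such that $T_j = (f_\alpha)_{V_j}(S_j)$ for some $S_j \in Q_\alpha(V_j)$. The induction hypothesis applied to $Q_\alpha$ yields $i < j$ and $\phi\colon V_j \to V_i$ with $Q_\alpha(\phi)(S_j) = S_i$. Naturality of $f_\alpha$ then gives
\[
P(\phi)(T_j) = P(\phi)(f_\alpha(S_j)) = f_\alpha(Q_\alpha(\phi)(S_j)) = f_\alpha(S_i) = T_i,
\]
so $T_j \succeq T_i$, contradicting badness. The main obstacle throughout is the structural lemma: understanding the set of tensors that do \emph{not} admit a fixed $T_1$ as a restriction, and stratifying it by images from strictly smaller polynomial functors in a way that remains valid over the finite ground field $K$.
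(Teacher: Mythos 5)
Your high-level framing is right---the paper indeed proves Theorem~\ref{thm:TensorRestriction} by an induction over polynomial generic representations with a well-founded complexity order on them, and your order (lexicographic on composition multiplicities by degree) is essentially the paper's $\preceq$ from \S\ref{ssec:Order}. But the crucial structural lemma you posit is not the right intermediate statement, and as stated I do not see how to prove it; the paper proves something genuinely different.

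Your lemma asserts that the restriction-closed set $X$ is \emph{covered} by the images of finitely many morphisms $f_\alpha:Q_\alpha \to P$ of polynomial functors with $Q_\alpha \prec P$. But images of morphisms of polynomial functors are themselves very constrained (they are honest subfunctors), and there is no reason an arbitrary restriction-closed $X$ should decompose this way---nor is this what the shift/embedding machinery produces. What the paper's embedding theorem gives is the \emph{opposite} direction: for a polynomial $f \in I_X$ not already explained by the projection $X' \subseteq P/R$, it produces a space $U$ and a function $h$ of strictly smaller degree such that the shift-localisation $(\Sh_U X)[1/h]$ \emph{injects into} a strictly smaller functor $(\Sh_U P)/R$. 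This is an injection of a shifted, localised piece of $X$, not a parametrisation of $X$; going from one to the other would require undoing a shift and a localisation, which cannot be phrased as a single natural transformation $Q_\alpha \to P$. Consequently your pigeonhole step has nothing to pigeonhole into.

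There is also a missing layer of induction. Even granting the embedding theorem, Noetherianity does not follow directly from a single application of it plus the outer induction on $P$: the locus where $h$ vanishes must be handled by an \emph{inner} induction on the minimal degree $\delta_X$ of an equation in $I_X\setminus K[P]\cdot I_{X'}$, which strictly drops. Your scheme has only the outer induction. Finally, the statement ``the main technical work will be to adapt those geometric arguments to finite $K$'' is exactly where the paper's actual content lies, and it is substantial: over $K$ finite there is no $\ZZ_{\geq 0}$-grading on $K[P(V)]$, only a filtration; the top-degree part $P_{>d-1}$ must be constructed via the Schur algebra rather than read off from a grading; and producing an affine-linear equation in the $R$-direction requires a new finite-field weight theory (maximally spread-out weight vectors, the operators $F_{n+1,j}[b]$, and a reduction to the prime field case to make Lemma~\ref{lm:Indep} available). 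None of this is foreshadowed in your sketch, and without it the structural lemma cannot be supplied.
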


As the following example shows, the requirement that $K$ be
finite is essential. 

\begin{ex}
If $|K|=\infty$, then the statement of the theorem fails
already for $d=2$. Indeed, if $\cha K \neq 2$, then consider
the matrices
\[ M_a:=\begin{bmatrix} 1 & a \\ -a & 1 \end{bmatrix} \in
(K^2)^{\otimes 2}  \]
for $a$ ranging through $K$. If $M_a \succeq M_b$, then there exists
a matrix $g \in \GL_2(K)$ such that $g M_a g^T = M_b$. Looking at the
symmetric parts of $M_a$ and $M_b$, we find that $g I g^T=I$, so $g$
is an orthogonal matrix and $M_a,M_b$ have the same characteristic
polynomial. But the characteristic polynomial of $M_a$ equals $(t-1)^2
- a^2$, so $M_a \succeq M_b$ holds (if and) only if $a^2=b^2$. Since
$|K|=\infty$, we have found infinitely many $2$-way tensors that are
incomparable with respect to $\succeq$. A similar construction works
when $\cha K=2$. It is easy to see that the failure for $d=2$ implies
the failure for all larger $d$.
\end{ex}

\subsection{Consequences of the tensor restriction theorem}

The tensor restriction theorem is reminiscent of the celebrated graph
minor theorem \cite{Robertson04}, which says that finite graphs are
well-quasi-ordered by the minor order. We are not aware of any logical
dependence between these theorems, but the tensor restriction theorem
has similar far-reaching consequences for tensors as the graph minor
theorem has for graphs. These consequences are best formulated using
the following notion.

\begin{de}
A {\em restriction-closed property} of $d$-way tensors is a property such
that if a tensor $S$ has it, and $S \succeq T$ holds, then also $T$ has it. We
can identify such a property with the data of a subset $X(V) \subseteq
V^{\otimes d}$ for all every finite-dimensional vector space $V$ over $K$,
such that if $\phi:V \to W$ is a linear map, then $\phi^{\otimes d}$
maps $X(V)$ into $X(W)$. 
\end{de}

\begin{ex}
Let $T \in W^{\otimes d}$. Then the property of {\em not} having $T$ as
a restriction is restriction-closed. We denote this property by $X_{\not
\succeq T}$. 
\end{ex}

If $X$ is a restriction-closed property, and $T$ is a tensor
that does not satisfy it, then we call $T$ a {\em forbidden
restriction} for $X$. 

\begin{cor} \label{cor:Main}
For $d$-way tensors over the fixed finite field $K$ the
following hold.
\begin{enumerate}
\item Restriction-closed properties satisfy the descending
chain condition: any chain $X_1 \supseteq X_2 \supseteq
\ldots$ of such properties stabilises. 

\item Every restriction-closed property $X$ is characterised by finitely
many forbidden restrictions, i.e., we have $X=\bigcap_{i=1}^k X_{\not
\succeq T_i}$ for some $k$ and some tensors $T_i \in V_i^{\otimes d}$.

\item For every restriction-closed property $X$ there exists a
finite-dimensional vector space $U$ such that for any $V$ and any $T \in
V^{\otimes d}$, we have $T \in X(V)$ if (and only if) $\phi^{\otimes d}
T \in X(U)$ for all linear maps $\phi:V \to U$.

\item For every restriction-closed property $X$ there exists
a number $n_0$ such that a tensor $T \in (K^n)^{\otimes d}$
satisfies $X$ if and only if for every subset $S \subseteq
[n]:=\{1,\ldots,n\}$ of size $n_0$ the sub-tensor of $T$ in $(K^S)^{\otimes
d}$ satisfies $X$.

\item For every restriction-closed property $X$ there exists
a polynomial-time deterministic algorithm that on input $n$ and a $T
\in (K^n)^{\otimes d}$ decides whether $T$ satisfies $X$.
\end{enumerate}
\end{cor}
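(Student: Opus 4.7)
Parts (1) and (2) follow directly from the well-quasi-order property. For (1), I would assume an infinite strictly-descending chain $X_1\supsetneq X_2\supsetneq\cdots$, pick witnesses $T_i\in X_i\setminus X_{i+1}$, and apply Theorem~\ref{thm:TensorRestriction} to obtain $i<j$ with $T_j\succeq T_i$; since $T_j\in X_j\subseteq X_{i+1}$ and $X_{i+1}$ is restriction-closed, this forces $T_i\in X_{i+1}$, a contradiction. For (2), the complement of $X$ is upward-closed under $\succeq$, and in any well-quasi-order an upward-closed set has only finitely many minimal elements (else one produces an infinite antichain); letting $T_1,\ldots,T_k$ be these, a tensor fails $X$ exactly when some $T_i$ is a restriction of it, giving $X=\bigcap_{i=1}^k X_{\not\succeq T_i}$.

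For (3), I would fix the finite list $T_1,\ldots,T_k$ from (2) with each $T_i\in V_i^{\otimes d}$, choose a finite-dimensional $U$ admitting inclusions $\iota_i:V_i\hookrightarrow U$, and view each $T_i$ as an element of $U^{\otimes d}$ (each lying outside $X(U)$ since $T_i\succeq T_i$). If $T\in V^{\otimes d}$ fails $X$, then some $\phi:V\to V_i$ realises $T\succeq T_i$, and $\iota_i\circ\phi:V\to U$ sends $T$ to an element of $U^{\otimes d}\setminus X(U)$; conversely, any $\phi':V\to U$ with $\phi'^{\otimes d}T\notin X(U)$ satisfies $\phi'^{\otimes d}T\succeq T_i$ for some $i$ by (2), so $T\succeq T_i$ by transitivity.

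Part (4) is the main technical step, and the principal obstacle is that coordinate projections $\pi_S:K^n\to K^S$ form a highly restrictive class compared with the arbitrary $\phi:V\to U$ appearing in (3). My plan is to choose $n_0$ as a function of $|K|$ and the finite list of forbidden $T_i$ from (2), and argue by contraposition: if $T\in(K^n)^{\otimes d}$ fails $X$, then a witnessing map $\phi:K^n\to V_i$ has $n$ columns $\phi(e_1),\ldots,\phi(e_n)$ taking at most $|K|^{\dim V_i}$ distinct values, so a pigeonhole/merging argument groups indices by image value and yields a subset $S\subseteq[n]$ of bounded size together with a merging map $\rho:K^n\to K^S$ through which $\phi$ factors, whence $\rho^{\otimes d}T\notin X$. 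The hard part is that $\rho^{\otimes d}T$ is a merged restriction, not the coordinate sub-tensor $\pi_S^{\otimes d}T$; bridging this gap will require enlarging $n_0$ so that some genuine coordinate sub-tensor itself witnesses $T\succeq T_i$, exploiting the bounded dimensions of the $T_i$ and the finiteness of $K$.

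Part (5) follows directly from (2) and (4): fix the finitely many $T_i$ and the bound $n_0$; on input $T\in(K^n)^{\otimes d}$, enumerate the $\binom{n}{n_0}$ coordinate sub-tensors $T_S\in(K^{n_0})^{\otimes d}$, and for each $T_S$ test whether any $T_i$ is a restriction by brute-force enumeration of the finitely many linear maps $K^{n_0}\to V_i$. With $n_0$, $|K|$, and the $T_i$ all fixed as constants, this amounts to constant time per sub-tensor and polynomial time in $n$ overall.
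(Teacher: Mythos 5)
Parts (1), (2), (3) and (5) of your proposal are correct and follow essentially the paper's route. Your (1) is the same contradiction argument. Your (2) invokes the general fact that in a well-quasi-order every upward-closed set has finitely many minimal elements up to equivalence; the paper instead builds the chain $X \subsetneq X_k$ iteratively and appeals to (1), but these are interchangeable standard WQO arguments. Your (3) is also the paper's argument, phrased via the inclusion $\iota_i$ rather than via factorisation of $\psi$ through $U$, which is the same thing. Your (5) is exactly the paper's derivation from (4).

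The genuine gap is in (4), and you in fact identify it yourself but do not close it. The pigeonhole/merging argument you sketch produces a map $\rho : K^n \to K^S$ that is a ``merge'' (sending several basis vectors $e_j$ to the same $e_{j'}$), not a coordinate projection (which sends $e_j$ to $0$ for $j \notin S$). The merged tensor $\rho^{\otimes d}T$ has entries that are \emph{sums} of entries of $T$, not a sub-array. Your remark that ``bridging this gap will require enlarging $n_0$ so that some genuine coordinate sub-tensor itself witnesses $T \succeq T_i$'' is precisely a restatement of what (4) asserts, so as written the plan is circular, and no mechanism is given for producing such a sub-tensor. This step is in fact the technically hard content of the corollary: the paper proves (4) in \S\ref{ssec:FI} by an entirely different route, showing that $S \mapsto I_X(K^S)$ is a finitely generated $\FI$-module and invoking the Noetherianity theorem of Church--Ellenberg--Farb (Theorem~\ref{thm:Church}). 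That in turn relies on the finite generation of $I_X$, which is exactly what the Noetherianity Theorem~\ref{thm:Noetherianity} (the main result proved via the embedding theorem) supplies. So (4) does not follow from the tensor restriction theorem by an elementary pigeonhole argument; it sits on top of the paper's $\FI$-module machinery, and your proposal leaves this part unproved.
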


The proofs of (1), (2), and (3) are straightforward from the
tensor restriction theorem, and conversely the tensor
restriction theorem follows from each of these. 

\begin{proof}[Proofs of (1),(2),(3) from the tensor
restriction theorem and vice versa.]

Assuming the tensor restriction theorem, we prove (1): whenever $X_i$
and $X_{i+1}$ are not the same property, there exists a tensor $T_i$
that satisfies $X_i$ but not $X_{i+1}$. For $i<j$ we then have $T_j \not
\succeq T_i$, and hence $X_i \neq X_{i+1}$ holds only finitely many times.

Next we prove (1) $\Rightarrow$ (2). Start with $k=0$. While $X$ is
strictly contained in $X_k:=\bigcap_{i=1}^k X_{\not \succeq T_i}$, there
is a tensor $T_{k+1}$ that does not satisfy $X$ but does not have any of
the tensors $T_1,\ldots,T_k$ as a restriction. This yields a strictly
descending chain $X_0 \supsetneq X_1 \supsetneq \ldots$, which by (1)
must terminate, so that $X$ is equal to $X_k$ for some $k$.

For (2) $\Rightarrow$ (3), we take for $U$ any space of dimension at
least that of all of the spaces $V_i,\ i=1,\ldots,k$, where $T_i \in
V_i^{\otimes d}$. If $T \in V^{\otimes d}$ does not lie in
$X(V)$, then it has a restriction equal to some
$T_i$, so that $\psi^{\otimes d} T=T_i$ for some linear map $\psi:V \to
V_i$. Now $\psi$ factors
via a linear map $\phi:V \to U$, and it follows that $\phi^{\otimes d}
T \not \in X(U)$.

Finally, (3) implies the tensor restriction theorem: let $T_i \in
V_i^{\otimes d},\ i=1,2,\ldots$, and define $X_n:=\bigcap_{i=1}^n
X_{\not \succeq T_i}$ and $X:=\bigcap_{i=1}^\infty X_{\not \succeq T_i}$.
Let $U$ be as in (3) for $X$. Then, since $X_0(U)$ is a finite set, the
chain of subsets
\[ X_0(U) \supseteq X_1(U) \supseteq \ldots \]
stabilises after finitely many steps: $X_n(U)=X(U)$. Then in particular
$T_{n+1}$, which is not in $X(V_{n+1})$, is not in $X_n(V_{n+1})$,
which means that it must have some $T_i$ with $i \leq n$ as a restriction.
\end{proof}

Note that the difference between (3) and (4) is that in (4) we only
consider coordinate projections $K^n \to K^I$. The proof of (4) is
slightly more involved and deferred to \S\ref{ssec:FI}. Once (4) is established,
however, (5) is immediate, since there are only $\binom{n}{n_0}$ subsets
$I$ of size $n_0$, and this is a polynomial in $n$. Note that for this
proof it does not matter whether the tensor in (5) is given in dense
input form (an array of $n^d$ elements from $K$) or in sparse form (a
list of tuples $(i_1,\ldots,i_d,a)$ where $(i_1,\ldots,i_d)$ specifies
the position of a tensor entry and $a \in K$ its value).

\begin{re} \label{re:InfiniteFields}
Versions of (1),(3),(4), and (5) also hold for restriction-closed tensor
properties over an {\em infinite} field, provided that the tensor property
can be expressed by polynomial equations in the tensor
entries. For (1),(3) this follows from \cite{Draisma17}. For
(4),(5), this follows from (1),(3) and the technique in
\S\ref{ssec:FI} below.
\end{re}

\subsection{Restriction-monotone functions}

Tensor restriction plays an important role in theoretical computer
science, in particular through many notions of tensor rank,
of which we briefly discuss two here. 

\begin{de}
The {\em rank} $\rk(S)$ of $S \in V^{\otimes d}$ is the minimal $r$
such that $S$ can be written as 
\[ S=\sum_{i=1}^r v_{i,1} \otimes \cdots \otimes v_{i,d} \]
for suitable vectors $v_{i,1},\ldots,v_{i,d}$. 
The {\em asymptotic rank} of $S$ is the limit  
\[ \lim_{t \to \infty} \sqrt[t]{\rk(S^{\boxtimes t})}, \]
where the {\em vertical tensor product} $S^{\boxtimes t}$ is the $d$-way
tensor in $(V^{\otimes t})^{\otimes d}$ obtained by tensoring $t$ copies
of $S$ and grouping together the $t$ copies of the first copy of $V$,
the $t$ copies of the second copy, and so on.
\end{de}

If $S \succeq T$, then $\rk(S) \geq \rk(T)$ and $S^{\boxtimes t} \succeq
T^{\boxtimes t}$ for every natural number $t$, so that also the asymptotic
rank of $S$ is at least that of $T$. This shows that rank and asymptotic
rank are both monotone in the following sense.

\begin{de}
A function $f$ that assigns to any $d$-way tensor a real
number is called {\em restriction-monotone} if $S \succeq T$
implies that $f(S) \geq f(T)$. 
\end{de}

\begin{cor} \label{cor:WellOrdered}
Let $f$ be any restriction-monotone function on $d$-way tensors over
the finite field $K$. Then the set of values of $f$ in $\RR$ is a
well-ordered set.
\end{cor}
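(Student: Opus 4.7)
The plan is to reduce well-ordering to the nonexistence of infinite strictly decreasing sequences in the image of $f$, and then to derive a contradiction from any such sequence using Theorem~\ref{thm:TensorRestriction} together with restriction-monotonicity. Since $\RR$ is totally ordered, a subset of $\RR$ is well-ordered in the inherited order if and only if it contains no infinite strictly decreasing sequence; so it suffices to rule out such sequences inside $f(\{T : T \text{ is a } d\text{-way tensor over } K\})$.

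Suppose for contradiction that there is an infinite strictly decreasing sequence $f(T_1) > f(T_2) > f(T_3) > \cdots$ of values of $f$, realised by tensors $T_i \in V_i^{\otimes d}$ (where each $V_i$ is some finite-dimensional $K$-vector space). By the tensor restriction theorem applied to the sequence $(T_i)_{i \in \NN}$, there exist indices $i<j$ such that $T_j \succeq T_i$. Restriction-monotonicity of $f$ then yields $f(T_j) \geq f(T_i)$, which directly contradicts $f(T_i) > f(T_j)$, the latter being forced by $i<j$ and the strict decrease of the sequence of values.

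This contradiction shows that no infinite strictly decreasing sequence exists in the image of $f$, so that image is well-ordered. There is no serious obstacle here: the entire argument is essentially a one-line consequence of Theorem~\ref{thm:TensorRestriction}, with the only mild subtlety being the initial observation that in a totally ordered set (a subset of $\RR$), well-ordering is equivalent to the absence of infinite strictly descending chains.
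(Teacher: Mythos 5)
Your proof is correct and is essentially the same argument as the paper's, just routed more directly. The paper deduces the corollary from the descending chain condition (Corollary~\ref{cor:Main}, part (1)) by forming the restriction-closed properties $X_{\leq a_i}$ and exhibiting a strictly descending chain; when that proof is unfolded (since part (1) is itself proved from Theorem~\ref{thm:TensorRestriction}), it reduces to exactly your observation that $T_j \succeq T_i$ for $i<j$ forces $f(T_j) \geq f(T_i)$, contradicting the strict decrease. Your version bypasses the intermediate corollary and applies the tensor restriction theorem and monotonicity directly, which is a mild streamlining rather than a different idea.
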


\begin{proof}
If not, then there exist tensors $T_1,T_2,\ldots$ on which $f$ takes
values $a_1>a_2>\ldots$. Let $X_{\leq a_i}$ be the tensor property of
having $f$-value $\leq a_i$. Since $f$ is restriction-monotone, this property is
restriction-closed. Furthermore, since $T_i \in X_{\leq a_i}
\setminus X_{\leq a_{i+1}}$, we have 
\[ X_{\leq a_1} \supsetneq X_{\leq a_2} \supsetneq \ldots \]
But this contradicts Corollary~\ref{cor:Main}, part (1). 
\end{proof}

In particular, the set of asymptotic ranks of $d$-way tensors over a
fixed finite field is well-ordered. 

\begin{ex}
Take $d=3$. By Corollary~\ref{cor:WellOrdered}, the set $S \subseteq
\RR_{\geq 0}$ of asymptotic ranks of $3$-way tensors is well-ordered. This
means that $S \setminus [0,4]$ contains a minimal element $4+\epsilon$
with $\epsilon>0$. Hence in particular, the asymptotic rank of $2 \times
2$-matrix multiplication, a tensor in $K^4 \otimes K^4 \otimes K^4$,
is either $4$ (which is equivalent to the well-known conjecture that the
exponent of matrix multiplication over $K$ is $2$; see \cite{Conner21})
or at least $4+\epsilon$. We point out, though, that we do not know
whether asymptotic ranks of tensors over an {\em infinite} field are
well-ordered, because the property of having asymptotic rank at most some
real number is not (evidently, at least!) a Zariski-closed property---see
Remark~\ref{re:InfiniteFields}.
\end{ex}

\comment{
\begin{ex}
By \cite{}, any $d$-way tensor either has asymptotic rank
$0$ (in which case it is zero), or $1$ 
(in which case it is a tensor product of vectors), or asymptotic
rank $\geq ..$. The existence of such gaps is a consequence of
Corollary~\ref{cor:WellOrdered}.
\end{ex}
}

\begin{ex}
Another notion of rank that is restriction-monotone is {\em analytic
rank} \cite{Lovett19}. Fix
a non-trivial character (group homomorphism) $\chi:(K,+) \to
(\CC^*,\cdot)$. Thinking of $T \in V^{\otimes d}$ as a multilinear form
$(V^*)^d \to K$, the analytic rank of $T$ equals
\[ -\log_{|K|} \EE(\chi(T(x_1,\ldots,x_d))), \]
where $\EE$ stands for expectation in the probabilistic model where 
$(x_1,\ldots,x_d)$ is picked uniformly at random in $(V^*)^d$.
The analytic rank is
restriction-monotone be Lemma~\ref{lm:Analytic} below. Hence, by
Corollary~\ref{cor:WellOrdered}, the set of analytic ranks of $d$-linear
forms over $K$ is a well-ordered subset of the real numbers.
\end{ex}

The following is well-known to experts, but we did not find a proof in
the published literature, so we provide one here.

\begin{lm} \label{lm:Analytic}
The analytic rank is restriction-monotone.
\end{lm}

\begin{proof}
It is convenient to see this in the more general setting of
Remark~\ref{re:Tensors}, where we have different vector spaces
$V_1,\ldots,V_d$ and $T \in V_1 \otimes \cdots \otimes V_d$ is a regarded
as a multilinear function $V_1^* \times \cdots \times V_d^* \to K$.

Consider a linear map $\phi:V_d \to W$ and define $T':=\id_{V_1} \otimes
\cdots \otimes \id_{V_{d-1}} \otimes \phi$. For fixed
$(x_1,\ldots,x_{d-1}) \in V_1^* \times \cdots \times
V_{d-1}^*$, the linear form $T(x_1,\ldots,x_{d-1},\cdot)$ is
either zero, in which case
$\chi(T(x_1,\ldots,x_{d-1},x_d))=1$ for all $|V_d|$ choices of $x_d$,
or it is nonzero, in which case, as $x_d$ varies through $V$,
$T(x_1,\ldots,x_d)$ takes all values equally often, and
therefore the values of $\chi$ cancel out. We conclude that the
expectation in the analytic rank of $T$ equals 
\[ a |V_d| / (|V_1| \cdots |V_d|) \]
where $a$ is the number of tuples $(x_1,\ldots,x_{d-1})$ for which
the linear form is zero. By the same reasoning, 
the expectation in the analytic rank of $T'$ equals
\[ a' |W|  / (|V_1| \cdots |V_{d-1}| \cdot |W|) \]
where now $a'$ is the number of tuples
$(x_1,\ldots,x_{d-1},\cdot)$ for
which $T(x_1,\ldots,x_{d-1})$ is zero {\em on the image of $\phi^*:W^*
\to V^*$}. Now $a' \geq a$ and therefore 
the expression for $T'$ is at least
that for $T$. Taking $-\log_{|K|}$ on both sides, and
repeating this argument with linear maps in the other $d-1$
tensor factors, we are done. 
\end{proof}

\subsection{Generic representations}

Let $\Vec$ be the category of finite-dimensional vector spaces over the
finite field $K$.

\begin{de}
A {\em generic representation} is a functor $F:\Vec \to \Vec$.  
\end{de}

The terminology is explained by the observation that if $F$ is a generic
representation, then for each $n$, $F(K^n)$ is a representation of the
finite group $\GL_n(K)$ and of the finite monoid $\End(K^n)$ of $n \times
n$-matrices. Generic representations can therefore be thought of as
sequences of representations of $\End(K^n)$, one for each $n$, that
depend in a suitably generic manner on $n$. Generic representations form
an abelian category in which the morphisms are natural transformations.

\begin{ex}
Here are two rather different examples of generic
representations: 
\begin{enumerate}
\item the functor $T^d$ that sends $V$ to $V^{\otimes d}$ and $\phi:V
\to W$ to $\phi^{\otimes d}$; and 
\item the functor that sends $V$ to the $K$-vector space $KV$
with basis $V$ and $\phi:V \to W$ to the unique linear map
$KV \to KW$ that sends the basis vector $v \in V$ to the
basis vector $\phi(v) \in W$. \qedhere
\end{enumerate}
\end{ex}

The following beautiful theorem characterises a particularly
nice class of generic representations. 

\begin{thm}[{\cite[Theorem 4.14]{Kuhn94}}] \label{thm:Kuhn}
For a generic representation $F: \Vec \to \Vec$ the
following properties are equivalent: 
\begin{enumerate}
\item $F$ has a finite composition series in the abelian
category of generic representations;
\item the function $d_F:\ZZ_{\geq 0} \to \ZZ_{\geq 0}$
defined by $d_F(n):=\dim F(K^n)$ is (bounded above by) a
polynomial in $n$; and 
\item $F$ is a subquotient of a finite direct sum $T^{d_1}
\oplus \cdots \oplus T^{d_n}$ for suitable $d_1,\ldots,d_n
\in \ZZ_{\geq 0}$. 
\end{enumerate}
\end{thm}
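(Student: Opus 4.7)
The plan is to close the diamond of implications (3) $\Rightarrow$ (2), (3) $\Rightarrow$ (1), (1) $\Rightarrow$ (3), and the main direction (2) $\Rightarrow$ (3). The first is immediate from $\dim T^d(K^n) = n^d$ and the fact that dimensions cannot grow under passage to subquotients: a subquotient of $T^{d_1} \oplus \cdots \oplus T^{d_m}$ has dimension at most $\sum_i n^{d_i}$, a polynomial in $n$. For (3) $\Rightarrow$ (1), it suffices to show that each $T^d$ has finite length, since finite direct sums and subquotients inherit this. I would argue finite length of $T^d$ via the natural action of $KS_d$ on $T^d$ by permuting tensor factors: the image of $KS_d$ in $\mathrm{End}(T^d)$ is a finite-dimensional $K$-algebra whose idempotent decomposition breaks $T^d$ into finitely many indecomposable summands, each of which evaluates at $K^n$ (for $n$ large) to an irreducible polynomial $\GL_n$-module and hence is a simple generic representation.

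For (1) $\Rightarrow$ (3), the key fact is that every simple generic representation $L$ is a subquotient of some $T^d$. One proves this by fixing a minimal $n$ with $L(K^n)\neq 0$, lifting any nonzero $s \in L(K^n)$ to a sum of pure tensors in $(K^n)^{\otimes n}$, and noting that the resulting natural map $T^n \to L$ (obtained by acting the universal linear endomorphism on $s$) is surjective by simplicity. A d\'evissage along the composition series of $F$ then assembles $F$ as a subquotient of a finite sum of tensor powers.

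The main obstacle is (2) $\Rightarrow$ (3), for which I would use the \emph{cross-effect} formalism. The $k$-th cross-effect $\mathrm{cr}_k F(V_1,\ldots,V_k)$ is the multi-homogeneous summand of $F(V_1 \oplus \cdots \oplus V_k)$ on which every $V_i$ acts non-trivially, equivalently the intersection of the kernels of the maps induced by projection onto proper sub-sums of the $V_i$. An inclusion-exclusion identity expresses $\dim F(K^n)$ as $\sum_{k\geq 0} \binom{n}{k} \dim \mathrm{cr}_k F(K,\ldots,K)$, so polynomial growth of $d_F$ forces $\mathrm{cr}_k F = 0$ for $k$ exceeding the polynomial degree $D$; call such $F$ polynomial of degree $D$. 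One then inducts on $D$: the top cross-effect $\mathrm{cr}_D F$ is a multilinear functor, canonically a subquotient of $T^D$ via the universal property relating symmetric multilinear functors to tensor powers, equivariantly for the natural $S_D$-action; while $F/(\text{image of }\mathrm{cr}_D F)$ is polynomial of degree strictly less than $D$, and the inductive hypothesis realises it as a subquotient of a sum of tensor powers.

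The real technical hurdle, and where I expect to spend most effort, is in splicing the top cross-effect with the inductively realised lower-degree quotient into a \emph{single} subquotient of a fixed sum $T^{d_1}\oplus\cdots\oplus T^{d_m}$. Because $\mathrm{Ext}^1$ between polynomial functors is generally nonzero in positive characteristic, $F$ need not split as a direct sum of its two layers, and one must arrange that the extension class is realised inside an ambient tensor power. The clean way to do this is to work with projective or injective envelopes in the subcategory of polynomial functors of degree at most $D$, and to use that these envelopes are themselves finite direct sums of tensor powers (a fact ultimately equivalent to (3) and proved by the same induction). This makes the argument genuinely inductive on $D$, with the existence of finite-length projective/injective envelopes serving as the inductive payload that carries the proof through.
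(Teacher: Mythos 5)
The paper cites this theorem to Kuhn and does not prove it, so I am evaluating your argument on its own merits. There are two genuine gaps.

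First, your argument for $(3)\Rightarrow(1)$ is wrong. You decompose $T^d$ using primitive idempotents of the image of $KS_d$ in $\End(T^d)$ and assert that the resulting indecomposable summands are simple generic representations because they evaluate to irreducible $\GL_n$-modules. This fails exactly in the relevant setting of positive characteristic: when $\cha K$ divides $d!$, the algebra $KS_d$ is not semisimple, and its indecomposable direct summands of $T^d$ are far from simple. The sharpest counterexample is $d=2$ over $\FF_2$: $\FF_2[S_2]$ is local, so this decomposition produces the single summand $T^2$ itself, which has a proper subfunctor (symmetric tensors, in turn containing the image of the Frobenius map $V\to T^2V$). Over $\QQ$ your argument is Schur--Weyl duality and would be fine, but here you need to argue finite length of $T^d$ differently---the standard route is exactly the paper's Proposition~\ref{prop:FriedlanderSuslin}: $A_{\leq d}(K^m)$ is a \emph{finite-dimensional} algebra because $K[\End(K^m)]_{\leq d}$ is, and then $T^d$ has finite length because $T^d(K^m)$ does as an $A_{\leq d}(K^m)$-module.

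Second, in $(2)\Rightarrow(3)$ your central reduction---that the top cross-effect $\mathrm{cr}_D F$, being additive in each variable, is ``canonically a subquotient of $T^D$''---is false over non-prime fields. Over $\FF_q$ with $q=p^e$ and $e>1$, the Frobenius twist $V\mapsto V^{(p)}$ has $\mathrm{cr}_2=0$ (it is additive, i.e.\ cross-effect degree $1$) yet it is not a subquotient of $T^1$; it embeds only into $T^p$ via $v\mapsto v\otimes\cdots\otimes v$. So a cross-effect degree $D$ functor need not land inside $T^D$, and the inductive bookkeeping on $D$ does not track the correct exponents $d_i$. One can repair this by replacing ``subquotient of $T^D$'' with ``subquotient of a finite sum $\bigoplus_{i_1,\dots,i_D} T^{p^{i_1}+\dots+p^{i_D}}$'' (additive functors over $\FF_q$ decompose into twisted identities $V\mapsto V^{(p^i)}$, $0\le i<e$), but this changes the inductive quantity and needs to be threaded through carefully. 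Separately, the ``splicing'' step as you describe it is circular: you invoke that projective/injective envelopes in degree $\leq D$ are finite sums of tensor powers, which you acknowledge is ``ultimately equivalent to (3),'' without a clean base case or a clearly decreasing inductive parameter. The usual way to avoid both problems is to pass through the finite-dimensional Schur algebra equivalence again: the statement ``every simple of degree $\leq d$ is a composition factor of some $T^e$'' reduces to identifying simple $A_{\leq d}(K^m)$-modules inside $T^e(K^m)$, which sidesteps cross-effects entirely.

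Your sketch of $(1)\Rightarrow(3)$ is also too vague to assess---``lifting $s\in L(K^n)$ to a sum of pure tensors in $(K^n)^{\otimes n}$'' has no obvious meaning when $L$ is an abstract simple functor rather than a subquotient of a tensor power---but with the Schur algebra machinery in place this direction is routine, so I would not dwell on it.
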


We call a generic representations satisfying any of the equivalent
properties above {\em polynomial}. Often, we will drop the adjective
generic and just speak of {\em polynomial representations}.

\begin{ex}
The generic representation $V \mapsto V^{\otimes d}$ is
polynomial, and so is the generic representation $V \mapsto
S^d V$. The generic representation $V \mapsto KV$ is not
polynomial, because $\dim KV=|V|=|K|^{\dim V}$ is
exponential in $\dim V$.
\end{ex}

\subsection{The restriction theorem for polynomial representations }

The tensor restriction theorem generalises as follows. 

\begin{thm}[The restriction theorem in polynomial
representations] \label{thm:Restriction}
Let $P$ be a polynomial generic representation over the
finite field $K$ and for $i \in \NN$ let
$T_i \in P(V_i)$. Then there exist $i<j$ and a linear map $\phi:V_j \to
V_i$ such that $T_i=P(\phi) T_j$.
\end{thm}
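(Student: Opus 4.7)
The plan is to reduce Theorem \ref{thm:Restriction} to Theorem \ref{thm:TensorRestriction} by exploiting Kuhn's structural classification (Theorem \ref{thm:Kuhn}) of polynomial generic representations. The key inputs are two preservation lemmas for the well-quasi-order property under subrepresentations and quotients: if $P' \hookrightarrow P$, every sequence in $P'(V_i)$ is a sequence in $P(V_i)$ and a witness in $P$ serves as one in $P'$; if $P \twoheadrightarrow P''$, any sequence in $P''(V_i)$ lifts to $P(V_i)$ where a witness produced by the hypothesis for $P$ descends back to $P''$. Combined with Theorem \ref{thm:Kuhn}(3), which realises $P$ as a subquotient of some finite direct sum $Q = T^{d_1} \oplus \cdots \oplus T^{d_n}$, these reductions leave only the case $P = Q$.

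I would then induct on the number $n$ of summands. The base case $n = 1$ is Theorem \ref{thm:TensorRestriction}. For the inductive step, write $Q = Q' \oplus T^d$ with $Q'$ covered by the induction hypothesis, and consider a sequence $(S_i, R_i) \in Q'(V_i) \oplus T^d(V_i)$. Invoking the wqo for $Q'$ iteratively extracts an infinite subsequence $(i_k)$ together with a consistent family of witnesses $\phi_k \colon V_{i_{k+1}} \to V_{i_k}$ whose compositions $\Psi_{kl} := \phi_k \circ \cdots \circ \phi_{l-1}$ satisfy $Q'(\Psi_{kl}) S_{i_l} = S_{i_k}$ for all $k < l$. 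What remains is to locate a single pair $k < l$ for which this \emph{same} map $\Psi_{kl}$ also restricts $R_{i_l}$ to $R_{i_k}$, giving a joint witness.

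The main obstacle is precisely this final coordination: the witnesses for the $Q'$-part are fixed once chosen and cannot be freely adjusted to also serve the $T^d$-part. To overcome it I would combine a pigeonhole argument -- available because $K$ is finite and hence each $T^d(V_{i_k})$ is a finite set -- with a Nash--Williams minimal bad sequence argument. Concretely, for each fixed $k$ the transports $T^d(\Psi_{kl}) R_{i_l}$, for $l > k$, lie in the finite set $T^d(V_{i_k})$, so a nested pigeonhole in $k$ thins the subsequence so that these transports stabilise to constants $R_k^\ast \in T^d(V_{i_k})$ compatible with the $\Psi_{kl}$. If one starts from a counterexample that minimises $(\dim V_1, \dim V_2, \ldots)$ lexicographically, then either $R_k^\ast = R_{i_k}$ for some $k$, immediately producing the joint restriction, or else the ``error'' tensors $R_{i_k} - R_k^\ast$ form a sequence to which Theorem \ref{thm:TensorRestriction} and the inductive hypothesis can be applied to construct a shorter bad sequence, contradicting minimality.
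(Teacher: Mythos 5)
Your proposal takes a genuinely different route from the paper, and it has two gaps. The first is a circularity: you invoke Theorem~\ref{thm:TensorRestriction} as the base case of your induction, but the paper gives no independent proof of that statement---it is precisely the special case $P=T^d$ of Theorem~\ref{thm:Restriction}, and both are derived from the Noetherianity Theorem~\ref{thm:Noetherianity} (which rests on the embedding theorem of \S\ref{ssec:Embedding}). So reducing Theorem~\ref{thm:Restriction} to Theorem~\ref{thm:TensorRestriction} does not, by itself, constitute progress: you would still owe a self-contained argument for a single tensor power, and that argument is exactly where all the difficulty lives.

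The second and more serious gap is in the inductive step for the direct sum $Q' \oplus T^d$. You correctly identify the obstacle---the witness must be \emph{shared} between the two summands---and the nested pigeonhole that stabilises the transports to constants $R_k^\ast$ compatible with the chain $\Psi_{kl}$ is sound. But the final appeal to a Nash--Williams minimal bad sequence is unsubstantiated. If $R_k^\ast \neq R_{i_k}$ for all $k$, the ``error'' tensors $R_{i_k}-R_k^\ast$ live in the \emph{same} spaces $T^d(V_{i_k})$; there is no visible dimension drop, so no lexicographically smaller bad sequence. Moreover, what you actually know is that $T^d(\Psi_{kl})(R_{i_l}-R_l^\ast)=0$, so the chain maps \emph{kill} the errors; applying Theorem~\ref{thm:TensorRestriction} to the errors would produce witnesses $\psi$ with no relation to the $\Psi_{kl}$, and nothing in the proposal reconnects them. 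In general a ``same-witness'' product of two well-quasi-ordered parameterizations need not be well-quasi-ordered, so the direct-sum step needs a genuinely new idea, not a soft wqo template. The paper never decomposes into tensor powers at all: it proves Noetherianity of restriction-closed subsets $X\subseteq P$ by working with the vanishing ideals $I_X$, weight theory, and the spreading operators $F_{n+1,j}[b]$, inducting along the well-founded order $\prec$ on polynomial representations (comparing top-degree parts), and using the embedding theorem to pass to a strictly smaller representation. This exploits that every subset of a finite vector space is cut out by polynomial equations, sidestepping the combinatorial wqo manipulation entirely.
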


We will use the term restriction also in this more general context, i.e.,
the conclusion of the theorem says that $T_i$ is a restriction of $T_j$.

\begin{re} \label{re:Circuit}
The condition that $P$ be polynomial cannot be dropped. For instance,
let $P$ be the functor that sends $V$ to $KV$. For each $n
\geq 3$ let $T_n \in P(K^{n-1})$ be the formal sum 
\[ T_n:=v_1 + \cdots + v_n \in P(K^{n-1}) \]
where $\{v_1,\ldots,v_n\} \in K^{n-1}$ is a {\em circuit}:
any $n-1$ of the $v_i$ are
a basis of $K^{n-1}$. We stress that in $P(K^{n-1})$
the $v_i$ are basis vectors, and $T_n$ is the sum of these basis
vectors. We claim that no $T_n$ is a restriction of any $T_m$ with $m
\neq n$. Indeed, if it were, then writing $T_m=v'_1 + \cdots + v'_m$,
there would be a linear map $K^{m-1} \to K^{n-1}$ that maps the circuit
$\{v_1',\ldots,v_m'\}$ to the circuit $\{v_1,\ldots,v_n\}$. By basic
linear algebra, such linear maps do not exist.
\end{re}

Corollary~\ref{cor:Main} generalises verbatim to polynomial 
representations, and so does Corollary~\ref{cor:WellOrdered}.

\begin{re} \label{re:Multivariate}
Versions of the Theorem~\ref{thm:Restriction} and its corollaries also hold for
{\em multivariate polynomial representations}, defined as functors
$P:\Vec^k \to \Vec$ for which $\dim_K P(K^{n_1},\ldots,K^{n_k})$
is a polynomial in $n_1,\ldots,n_k$. Indeed, given elements $T_i \in
P(V_i^{(1)},\ldots,V_i^{(k)})$ for $i=1,2,\ldots$, we can choose linear
injections $\iota_i^{(j)}$ from $V_i^{(j)}$ into a $U_i \in \Vec$
(which depends only on $i$), and linear surjections $\pi_i^{(j)}:U_i
\to V_i^{(j)}$ with $\pi_i^{(j)} \circ
\iota_i^{(j)}=\id_{V_i^{(j)}}$. Then define
\[ T_i':=P(\iota_i^{(1)},\ldots,\iota_i^{(k)})T_i \in
P(U_i,\ldots,U_i)=:Q(U_i) \]
where $Q$ is now a univariate polynomial generic representation.
Theorem~\ref{thm:Restriction} applied to $Q$ says that there exist $i<j$
and a linear map $\psi:U_j \to U_i$ such that
\[ Q(\psi)T_j'=P(\psi,\ldots,\psi)T_j'=T_i'. \]
We then have 
\[ P(\pi_i^{(1)} \circ \psi \circ
\iota_j^{(1)},\ldots,\pi_i^{(k)} \circ \psi \circ
\iota_j^{(1)})T_j=T_i, \]
as desired.
\end{re}

\subsection{Proof strategy: the polynomial method}

Rather than proving the restriction theorem for polynomial representations
directly, we will prove Noetherianity, corresponding to (1) in
Corollary~\ref{cor:Main}: if $P$ is a polynomial representation, and $X_1
\supseteq X_2 \supseteq \ldots$ are restriction-closed properties, then
$X_n=X_{n+1}$ for all sufficiently large $n$. 

To establish Noetherianity, we adapt the proof method of
\cite{Draisma17} for polynomial functors over {\em infinite fields} to our
current setting. This is far from straightforward. For instance, a
polynomial functor over an infinite field and its
coordinate ring both have a $\ZZ_{\geq
0}$-grading, whereas a polynomial representation over the
finite field $K$ and its
coordinate ring only have a grading by
$\{0,1,\ldots,|K|-1\}$. Nevertheless, after introducing the {\em degree}
$d$ of the polynomial representation $P$, we show that $P$ has a unique
minimal sub-representation $P_{>d-1}$ the quotient by which has degree
at most $d-1$. We think of $P_{>d-1}$ as the {\em top-degree part} of
$P$. We then take an irreducible sub-representation $R$ in $P_{>d-1}$, and
assume that the Noetherianity statement holds for $P/R$ and various other
polynomial representations that have the same top-degree part as $P/R$ and
are therefore in a lexicographic sense smaller than $P$. This means that
if $X_1 \supseteq X_2 \supseteq \ldots$ is a chain of restriction-closed
properties in $P$, then their projections $X'_1 \supseteq X'_2 \supseteq
\ldots$ in $P/R$ stabilise. Therefore, it suffices to prove Noetherianity
for properties $X \subseteq P$ that have a fixed projection $X' \subseteq
P/R$. Then, to prove that any property $X \subseteq P$ with projection
$X'$ is Noetherian, we think of each $X(V)$ as a {\em Zariski-closed
subset} of $P(V)$, i.e., as given by polynomial equations in the finite
vector space $P(V)$. We do induction on the minimal degree of an equation
that vanishes identically on $X$ but not on $X'$. Using {\em spreading operators}
we show that from such an equation we can construct many equations of
the same degree that are affine-linear in the $R$-direction. This allows
us to embed a certain subset of $X$ into a strictly smaller polynomial
functor, while on the complement of that subset a polynomial of strictly
smaller degree vanishes. Both subsets can therefore be
handled by induction.

We stress that this proof never actually looks at concrete tensors
or elements of $P(V)$---all reasoning uses polynomial
equations, and exploits the fact that every subset of a finite vector space is given
by polynomial equations. In this sense, the proof can be regarded an
instance of the polynomial method.

We remark that polynomial generic representations are not the same
thing as strict polynomial functors in the sense of Friedlander-Suslin
\cite{Friedlander97}. Roughly speaking, while former deal with sequences of
representations of the finite groups $\GL_n(K)$, the latter deal
with sequences of algebraic representations of the group schemes
$\GL_n$. Topological Noetherianity of strict polynomial functors,
over arbitrary rings with Noetherian spectrum and hence certainly over
finite fields, was established in \cite{Draisma20b}, using the techniques from
\cite{Draisma17}. However, strict polynomial functors have a
scheme structure built in and are therefore much more amenable to the techniques of
\cite{Draisma17} than
the polynomial generic representations that we study here. Furthermore,
even if one is interested only in polynomial generic representations
that come from strict polynomial functors by forgetting some of the
data---such as the functor $V \mapsto V^{\otimes d}$---our proof, in
which we mod out an irreducible subrepresentation $R$, requires that
one leaves the realm of these special representations.  This explains
the need for the new ideas developed in this paper.

\subsection{Further relations to the literature}

Restriction-closed properties of tensors are a rapidly expanding research
area. Here is a very small selection of recent research related to
our work.

In \cite{Karam22} it is proved, for various notions of rank including
ordinary tensor rank, slice rank, and partition rank, that a large tensor
of rank $r$ has a sub-tensor whose size depends only on $r$ and whose
rank is at least some function of $r$. For finite fields, this result also
follows from Corollary~\ref{cor:Main}, item (3)---in fact, by that item,
a subtensor of fixed size can be found of rank equal to $r$.  However,
Karam also finds an explicit formula for the size, while our theorem
does not give such a bound. It would be very interesting to see whether
{\em the proof} of our theorem could shed further light on such bounds.

In \cite{Cohen21} it is proved that over sufficiently large fields,
partition rank is bounded by a linear function of the analytic rank of
a tensor; and in \cite{Moshkovitz22} the condition on the field size is
removed at the cost of a polylogarithmic factor. This is the culmination
of many years of research by many authors into the relation between
bias and rank of tensors, starting with \cite{Green09} via polynomial
bounds in \cite{Milicevic19} and linear bounds for trilinear forms in
\cite{Adiprasito21}.  Using the proof of our tensor restriction theorem
and techniques from \cite{Draisma18b}, it is easy to recover the result
that partition rank is bounded from above by at least {\em some} function
of the analytic rank. However, again, our techniques do not yield bounds
that can compete with the state of the art.

In \cite{Conner21}, motivated by Strassen's asymptotic rank conjecture that says
that any {\em tight} tensor has the minimal possible asymptotic rank, the
authors study the geometry of various varieties of tensors, such as the
(closure of) the set of tight tensors. It would be interesting to study
these varieties from the perspective of this paper (over finite fields)
and from the perspective of \cite{Bik21} (over infinite fields). In
both cases, after a shift and a localisation, these varieties become of
the form a fixed finite-dimensional variety times an affine space that
depends on the size of the tensor. Over infinite fields, this follows
from the {\em shift theorem} in \cite{Bik21}, and over finite fields,
it follows from the weak shift theorem in this paper.

In \cite{Putman14} and \cite{Sam14}, the long-standing {\em
Lannes-Schwartz Artinian conjecture} was resolved, which says that any
finitely generated (not necessarily polynomial!) generic representation
$F:\Vec \to \Vec$ is Noetherian in the module sense: it satisfies the
ascending chain condition on subrepresentations. Dually, this means that
any descending chain of subrepresentations of $F^*:V^* \mapsto F(V)^*$
stabilises. Interpreting the elements of $F(V)$ as linear functions on
$F(V)^*$, one may interpret this as Noetherianity for {\em linear}
functorial subsets of $F^*$.  However, already for $F:V \mapsto K
\cdot V$, one can show that $F^*$ does not satisfy the descending chain
condition on {\em nonlinear} subsets. So topological Noetherianity as we
prove it seems restricted only to polynomial generic representations. It
would be nice to know a precise statement to this effect.  For instance,
is it true that the {\em only} generic representations for
which the restriction theorem holds are the polynomial representations?

In \cite{Snowden21}, Snowden extends many results about $\GL$-algebras
from \cite{Bik21} to modules over $\GL$-algebras equipped with a
compatible $\GL$-action.  Along the way, he also gives a proof of the
shift theorem that differs slightly from the proof in \cite{Bik21}, and
which uses the search for an element of weight $(1,\ldots,1)$ in a suitable
$\GL$-representation. This inspired the development of weight theory
in our current, different context in \S\ref{sec:Weights} and the idea
that a suitably spread out element in the vanishing ideal of a tensor
property would have weight $(1,\ldots,1)$---a key insight in the proof of the
embedding theorem in \S\ref{ssec:Embedding}.

\subsection{Organisation of this paper}

In Section~\ref{sec:PolyReps} we discuss the theory of generic polynomial
representations, including the definition of top-degree parts and shift
functors. In particular, we will see that the ring of functions on a
polynomial representation is itself a countable union of polynomial
representations.

In Section~\ref{sec:Weights}, we develop a partial analogue of the
classical weight theory for representations of group schemes $\GL_n$. This
includes the spreading operators alluded to above. Since functions on a
polynomial representation themselves live in a polynomial representation,
these spreading operators also act on functions.

In Section~\ref{sec:Noetherianity} we prove Noetherianity for polynomial
representations over the fixed finite field $K$, which implies the
restriction theorems for tensors and polynomial representations and
items (1)-(3) of Corollary~\ref{cor:Main}, both for tensors and for
polynomial representations, and also implies the existence
of a unique decomposition of a restriction-closed tensor property into 
irreducible such properties; see
Theorem~\ref{thm:Irreducibility}. We do so by first deriving Noetherianity
from an auxiliary result that we call the embedding theorem, since it is
the finite-field analogue of the embedding theorem in \cite{Bik21}. The
proof of the embedding theorem, then, is the heart of the paper. We also
derive from it a version of the shift theorem in \cite{Bik21}.

Finally, in Section~\ref{sec:FI} we use the theory of finitely generated
FI-modules to prove item (4) from Corollary~\ref{cor:Main}; as we have
seen, (5) is then a direct consequence.

\subsection*{Acknowledgments}

We thank Nate Harman and Andrew Snowden for useful discussions and for
references to relevant literature. 

\section{Polynomial generic representations} \label{sec:PolyReps}

Throughout the paper, $K$ is a fixed finite field, with $q$ elements. All
linear and multilinear algebra will be over $K$. We denote by $\Vec$ the
category of finite-dimensional $K$-vector spaces, and for $V \in \Vec$
we denote the dual space by $V^*$.

\subsection{Functions as polynomials} 

We introduce the ring of functions on a vector space; we
will also call this the coordinate ring. 

\begin{de}
Given $V \in \Vec$, we write $K[V]$ for the $K$-algebra of functions $V
\to K$. This has a natural algebra filtration
\[ \{0\}=K[V]_{\leq -1} \subseteq K[V]_{\leq 0} \subseteq K[V]_{\leq
1} \subseteq K[V]_{\leq 2} \subseteq \dots \]
where $K[V]_{\leq d}$ is the set of functions $f:V \to K$ for which
there exists an element of $\bigoplus_{e=0}^d S^e V^*$
that defines the function $f$.
\end{de}

We stress that $K[V]$ is an algebra of functions, not of polynomials. More
precisely, $K[V]$ is the quotient of the symmetric algebra $S V^*$
by the ideal generated by the polynomials $x^q-x$ as $x$ runs through
(a basis of) $V^*$. Since these polynomials are not homogeneous, $K[V]$
has no natural {\em grading}---but as seen above, it does have a natural
{\em filtration}.

Note further that $K[V]$ is a finite-dimensional $K$-vector
space, of dimension $q^{\dim(V)}$, the number of elements of $V$. 

\begin{de}
Given a basis $x_1,\dots,x_n$ of $V^*$, every element $f$ of $K[V]$ has a
unique representative polynomial in which all exponents of all variables
are $\leq q-1$; we will call this representative---which depends on the
choice of basis---the {\em reduced} polynomial representation for $f$
relative to the choice of coordinates.
\end{de}

The following lemma is immediate; the natural isomorphisms in it will
be interpreted as equalities throughout the paper.

\begin{lm}
For $V,W \in \Vec$ we have $K[V \times W] \cong K[V] \otimes K[W]$ via
the $K$-linear map map from right to left that sends $f \otimes g$ to
the function $(v,w) \mapsto f(v)g(w)$; this is a $K$-algebra isomorphism.

Similarly, the set of arbitrary maps $V \to W$ is canonically isomorphic to $K[V]
\otimes W$ via the $K$-linear map from right to left that sends $f
\otimes w$ to the function $v \mapsto f(v) \cdot w$.
\hfill $\square$
\end{lm}

Furthermore, we write $K[V]_0=K$ for the sub-$K$-algebra of constant
functions, and $K[V]_{>0}$ for the $K$-vector space spanned by all
functions that vanish at zero.

\subsection{Polynomial generic representations over $K$}

Recall Theorem~\ref{thm:Kuhn}, which characterises polynomial representations
among all generic representations. We will use the
following, alternative characterisation instead.

\begin{de} \label{de:PolRep}
A generic representation $P:\Vec \to \Vec$ is called {\em polynomial}
if there exists a $d$ such that for all $U,V \in \Vec$ the map
$P:\Hom(U,V) \to \Hom(P(U),P(V))$ lies in $K[\Hom(U,V)]_{\leq d} \otimes
\Hom(P(U),P(V))$. The minimal such $d \in \ZZ_{\geq -1}$ is called the
{\em degree} of $P$ and denoted $\deg(P)$.
\end{de}

Polynomial representations form an Abelian category, in which the
morphisms are natural transformations.

Any polynomial representation in the sense of Theorem~\ref{thm:Kuhn} is a
subquotient of a direct sum $T^{d_1} \oplus \cdots \oplus T^{d_k}$'s, and
this implies that it is polynomial in the sense of the definition above,
of degree at most the maximum of the $d_i$. In Remark~\ref{re:Kuhn},
we will see that, conversely, any generic representation that is
polynomial in the sense of the definition above is polynomial in the
sense of Theorem~\ref{thm:Kuhn}.

Every finite-degree {\em strict} polynomial functor $\Vec \to \Vec$
in the sense of Fried\-lan\-der-Suslin \cite{Friedlander97} gives rise
to a polynomial representation. But this forgetful functor
is not an equivalence of abelian categories. For instance, if $Q$ is a
strict polynomial functor of degree $d$ over $K$, then its $q$-Frobenius
twist is a polynomial functor of degree $dq$ over $K$ and hence not
isomorphic to $Q$. However, $Q$ and its $q$-Frobenius twist give rise to
the same generic representation.

\subsection{Schur algebras over $K$}

In spite of the discrepancy between strict polynomial functors
and polynomial generic representations, a version of the theorem by
Friedlander and Suslin that relates polynomial functors to representations
of the Schur algebra, does hold.

Fix a natural number $d$ and a $U \in \Vec$.  The composition map $\End(U)
\times \End(U) \to \End(U)$ gives rise, via pullback of functions,
to a $K$-linear map
\[ K[\End(U)]_{\leq d} \to K[\End(U)]_{\leq d} \otimes
K[\End(U)]_{\leq d}. \]
We write $A_{\leq d}(U):=K[\End(U)]_{\leq d}^*$. Dualising the map
above, we obtain a $K$-bilinear map 
\[ A_{\leq d}(U) \times A_{\leq d}(U) \to A_{\leq d}(U). \]
A straightforward computation, using the associativity of composition
of linear maps, shows that this turns $A_{\leq d}(U)$ into a unital,
associative algebra, with unit element $f \mapsto f(\id_U)$.

\begin{de}
The unital, associative algebra $A_{\leq d}(U)$ with the multiplication 
above is called the {\em Schur algebra} over $K$.
\end{de}

We remark that this is in fact a subalgebra of the Schur algebra in
\cite{Friedlander97}, which is the dual space to the space of degree
at most $d$ {\em polynomials} on $\End(U)$. Our Schur algebra consists
of only those linear functions that vanish on the ideal of polynomials
that define the zero function on $\End(U)$.

The Schur algebra comes with a homomorphism of monoids (not of
$K$-algebras) $\End(U) \to A_{\leq d}(U)$ defined by $\phi \mapsto
(f \mapsto f(\phi))$. This homomorphism is an embedding if $d \geq 1$.

\subsection{A finite-field analogue of the Friedlander-Suslin lemma}

Given a polynomial generic representation $P$ of degree at most $d$
and a vector space $U$, we turn $P(U)$ into an $A_{\leq d}(U)$-module
by a construction very similar to the construction of $A_{\leq d}(U)$:
first, the map
\[ \End(U) \times P(U) \to P(U), (\phi,p) \mapsto P(\phi)(p) \]
gives rise, via pull-back, to a $K$-linear map 
\[ P(U)^* \to K[\End(U)]_{\leq d} \otimes P(U)^*. \]
Dualising, we obtain a $K$-bilinear map  
\[ A_{\leq d}(U) \times P(U) \to P(U) \]
that turns $P(U)$ into a (unital) $A_{\leq d}(U)$-module. 

The following proposition is proved exactly as Friedlander-Suslin's
corresponding theorem, and it is also almost equivalent to
\cite{Kuhn94}{Proposition 4.10}.

\begin{prop} \label{prop:FriedlanderSuslin}
Fix a natural number $d$ and a $U \in \Vec$ of dimension at least $d$.
Then $P \mapsto P(U)$ is an equivalence of Abelian categories from the
category of polynomial generic representations $\Vec \to \Vec$ of degree
$\leq d$ to the category of $A_{\leq d}(U)$-representations.
\hfill $\square$
\end{prop}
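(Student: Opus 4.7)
The plan is to adapt Friedlander and Suslin's classical argument, with the divided-power constructions over infinite fields replaced by duals of filtered spaces of functions. Introduce the representable polynomial generic representation $\Gamma_U$ defined by
\[ \Gamma_U(V) := K[\Hom(U, V)]_{\leq d}^*, \]
with structure morphisms obtained by dualizing pre-composition of functions on Hom-spaces; explicitly, for $\phi:V\to V'$ the map $\Gamma_U(\phi)$ sends the evaluation functional $[\psi]$ attached to $\psi \in \Hom(U, V)$ to $[\phi \circ \psi] \in \Gamma_U(V')$. A direct degree count on compositions shows that $\Gamma_U$ is polynomial of degree $\leq d$, and $\Gamma_U(U) = A_{\leq d}(U)$.

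The first main step is a Yoneda-type identification, natural in any polynomial representation $P$ of degree $\leq d$,
\[ \Hom(\Gamma_U, P) \xrightarrow{\sim} P(U), \quad \eta \mapsto \eta_U(1), \]
where $1 \in A_{\leq d}(U)$ is the unit. The inverse sends $x \in P(U)$ to the natural transformation whose component at $V$ is the linear map $\Gamma_U(V) \to P(V)$ dual to the map $\phi \mapsto P(\phi)(x)$, which by the degree bound on $P$ lies in $K[\Hom(U, V)]_{\leq d} \otimes P(V)$. Naturality in $V$ follows from the cocycle identity for composition of linear maps, and well-definedness from the fact that any linear relation on the $[\phi]$'s in $\Gamma_U(V)$ automatically forces the corresponding relation among the $P(\phi)(x)$. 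In particular $\End(\Gamma_U) = A_{\leq d}(U)$, and the evaluation functor $P \mapsto P(U)$ is identified with $\Hom(\Gamma_U, -)$.

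It remains to show $\Gamma_U$ is a projective generator of the category of polynomial generic representations of degree $\leq d$ whenever $\dim U \geq d$. Projectivity is immediate, since kernels and cokernels of natural transformations are computed pointwise, so $P \mapsto P(U) = \Hom(\Gamma_U, P)$ is exact. For the generator property, it suffices to show that any nonzero simple polynomial representation $P$ of degree $\leq d$ has $P(U) \neq 0$. Choosing $V$ of minimal dimension with $P(V) \neq 0$, every singular endomorphism $e \in \End(V)$ factors through its image, a proper subspace on which $P$ vanishes, so $P(e) = 0$. Composing with a linear functional on $\End(P(V))$ that is nonzero at $\id_{P(V)}$, we obtain a polynomial function of degree $\leq d$ on $\End(V)$ that vanishes on the singular locus yet is nonzero at $\id_V$. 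A degree comparison against $\det$ (of degree $\dim V$) forces $\dim V \leq d \leq \dim U$, and a split retraction $V \hookrightarrow U \twoheadrightarrow V$ then realizes $P(V)$ as a direct summand of $P(U)$, giving $P(U) \neq 0$. With $\Gamma_U$ a projective generator whose endomorphism ring is $A_{\leq d}(U)$, standard Morita theory for finite-dimensional algebras yields the claimed equivalence.

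The step I expect to be the main obstacle is the degree comparison against $\det$ over a finite field: vanishing on the singular locus does not formally imply divisibility by $\det$ in $K[\End(V)]$, since functions are taken modulo the Fermat relations $x^q - x$. I would handle this by extending scalars to an algebraic closure $\bar{K}$, where the Schur algebra, the polynomial representation, and the Zariski-density argument all behave classically, and then descend; alternatively one argues directly with reduced polynomial representatives, tracking the resulting degree bounds more carefully.
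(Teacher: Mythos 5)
The paper gives no proof of this proposition, deferring to Friedlander--Suslin and to Kuhn; your argument carries out exactly the Friedlander--Suslin strategy, with $\Gamma^d(\Hom(U,-))$ replaced by its finite-field analogue $\Gamma_U(V)=K[\Hom(U,V)]_{\le d}^*$, so the approach matches the one the paper intends. Two small remarks. First, for the generator property you only check $P(U)\ne 0$ on simple $P$; you should add the one-line reduction that an arbitrary nonzero $P$ of degree $\le d$ has finite length and hence contains a simple subobject $S$ with $S(U)\subseteq P(U)$. Second, the step you flag as the main obstacle is real (the function ring $K[\End(V)]$ is not a domain, so one cannot cite unique factorization to divide by $\det$) but has a cleaner fix than extending scalars: restrict your degree-$\le d$ function $g$, which vanishes on singular matrices and is nonzero at $\id_V$, to the diagonal matrices $D_n\cong K^n$. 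Its reduced polynomial representative $\tilde g$ (all exponents $\le q-1$) restricts on each coordinate hyperplane $\{x_i=0\}$ to a reduced polynomial defining the zero function, hence vanishes as a polynomial, so $x_i\mid\tilde g$ for every $i$; since $K[x_1,\dots,x_n]$ is a UFD this forces $x_1\cdots x_n\mid\tilde g$, and $\tilde g\ne 0$ because $g(1,\dots,1)\ne 0$, giving $d\ge\deg\tilde g\ge n=\dim V$ directly over $K$.
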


\begin{re} \label{re:Kuhn}
It follows from this proposition that every polynomial
representation of degree $\leq d$ in the sense of
Definition~\ref{de:PolRep} has finite length. Therefore, it
is also polynomial in the sense of Theorem~\ref{thm:Kuhn}.
\end{re}

\begin{re} \label{re:Irred}
If $P$ is an irreducible polynomial generic representation, then for each
$U \in \Vec$, $P(U)$ is (zero or) an irreducible $\End(U)$-module. Indeed,
if $M$ were a nonzero proper submodule, then, for varying $V$, $Q(V):=\{p
\in P(V) \mid \forall \phi \in \Hom_\Vec(V,U) P(\phi)p \in M\}$ would
define a nonzero proper subrepresentation.
\end{re}

\subsection{Filtering a polynomial representation by degree}

A strict polynomial functor in the sense of Friedlander-Suslin has
a {\em grading} by degree. In contrast, we will see that a polynomial
generic representation only has a {\em filtration} by degree. One notable
exception is the degree-zero part of a polynomial
representation.

\begin{de} \label{de:DegreeZero}
Let $P:\Vec \to \Vec$ be a polynomial generic representation. Then we
define $P_0:\Vec \to \Vec$ by $P_0(V):=P(0)=:U$ for all $V \in \Vec$
and $P_0(\phi):=\id_U$ for all $\phi \in \Hom_\Vec(V,W)$. This is a direct
summand of $P$ in the Abelian category of polynomial generic
representations, called
the {\em degree-zero part} or {\em constant} part of $P$; we will also
informally say that $U$ is the constant part of $P$. The constant part
$P_0$ has a unique complement in $P$, namely,
\begin{align*} 
P_{>0}(V):=\{p \in P(V) \mid & P(0\cdot \id_V)p=0\}
\qedhere \end{align*}
\end{de}

A polynomial representation $P$ of degree at most some $e \leq d$ is
in particular a polynomial representation of degree at most $d$. On the
Schur algebra side, this inclusion of Abelian categories is made explicit
as follows: take a vector space $U$ of dimension at least $d$. Then the
inclusion $K[\End(U)]_{\leq e} \to K[\End(U)]_{\leq d}$
dualises to a
linear surjection $A_{\leq d}(U) \to A_{\leq e}(U)$. This surjection is
an algebra homomorphism, and hence if $M$ is a module over the latter
algebra, then it is also naturally a module over the former algebra.

This interpretation also shows {\em which} $A_{\leq d}(U)$-modules are
also $A_{\leq e}(U)$-modules, namely, those for which the kernel $I$
of the surjection acts as zero. Furthermore, if $M$ is an $A_{\leq
d}(U)$-module, and $N$ is an $A_{\leq d}(U)$-submodule of $M$, then
$M/N$ is an $A_{\leq e}(U)$-module if and only if $I \cdot (M/N)=0$,
i.e., if and only if $N$ contains the $A_{\leq d}(U)$-submodule $I \cdot
M$. We conclude that there is a unique inclusion-wise minimal $A_{\leq
d}(U)$-submodule $N$ of $M$ such that $M/N$ is an $A_{\leq e}(U)$-module,
namely, $N=I \cdot M$. 

By Proposition~\ref{prop:FriedlanderSuslin} we may translate this back to
polynomial representations: 

\begin{prop}
For any polynomial representation $P$ and any $e \in \ZZ_{\geq -1}$,
there is a unique inclusionwise minimal subrepresentation $Q$ such that
$P/Q$ is a polynomial representation of degree at most $e$.
\hfill $\square$
\end{prop}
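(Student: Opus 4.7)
The plan is simply to organise the module-theoretic discussion immediately preceding the proposition into a proof via the Friedlander-Suslin-type equivalence of Proposition~\ref{prop:FriedlanderSuslin}. First I would reduce to the nontrivial case $e < \deg(P)=:d$ (if $e \geq d$ then $Q=0$ works and is manifestly unique). Fix any $U \in \Vec$ with $\dim U \geq d$; by Proposition~\ref{prop:FriedlanderSuslin} the functor $P \mapsto P(U)$ is an equivalence from polynomial representations of degree $\leq d$ to finite-dimensional $A_{\leq d}(U)$-modules, and it restricts to an equivalence from those of degree $\leq e$ to $A_{\leq e}(U)$-modules.

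Next I would transport the problem across this equivalence. Let $M := P(U)$, and let $I \subseteq A_{\leq d}(U)$ be the kernel of the surjection of algebras $A_{\leq d}(U) \twoheadrightarrow A_{\leq e}(U)$ dual to the inclusion $K[\End(U)]_{\leq e} \hookrightarrow K[\End(U)]_{\leq d}$. As observed in the paragraphs above the statement, for an $A_{\leq d}(U)$-submodule $N \subseteq M$ the quotient $M/N$ is an $A_{\leq e}(U)$-module if and only if $I \cdot (M/N) = 0$, equivalently $I \cdot M \subseteq N$. Hence the submodule $N_0 := I \cdot M$ is the inclusion-wise minimal such $N$, and it is obviously unique with this property.

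Finally I would translate back: by the equivalence, $N_0$ corresponds to a unique subrepresentation $Q$ of $P$ with $Q(U) = N_0$, and both the property of being contained in every subrepresentation whose quotient has degree $\leq e$ and the property of having $P/Q$ of degree $\leq e$ are preserved by the equivalence. This yields the required minimal $Q$, and its uniqueness follows from uniqueness of $N_0$ together with the faithfulness of the equivalence.

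The only point requiring any care is the tacit verification that the notion of degree on the functor side matches the algebra $A_{\leq e}(U)$ on the module side, i.e., that a polynomial representation $P'$ of degree $\leq d$ has $\deg(P') \leq e$ if and only if the $A_{\leq d}(U)$-action on $P'(U)$ factors through $A_{\leq e}(U)$; this is essentially a repetition of the construction of the module structure and follows directly from Definition~\ref{de:PolRep}, so I do not expect a genuine obstacle here.
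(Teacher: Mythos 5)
Your argument is exactly the one the paper gives: the discussion preceding the proposition already carries out the reduction to $A_{\leq d}(U)$-modules, identifies $N_0 = I\cdot M$ as the minimal submodule, and the proposition is obtained by transporting back through Proposition~\ref{prop:FriedlanderSuslin}, which is why the paper leaves the proof as a $\square$. Your write-up is a correct and faithful expansion of that reasoning, with no gaps.
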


\begin{de}
Let $P$ be a polynomial representation and let $e \in \ZZ_{\geq -1}$. 
The unique inclusionwise minimal subrepresentation $Q$ of $P$ such that $P/Q$
has degree $\leq e$ is denoted by $P_{>e}$. 
\end{de}

\begin{ex}
Suppose that $\cha K=2$.  Consider the polynomial representation $P:V
\mapsto S^2 V$ and the polynomial representation $Q$ that sends $V$
to the space of symmetric tensors in $V \otimes V$. Then $P$ has
as a subrepresentation the representation $R$ that maps $V$ to the
space of squares of elements of $V$, and this is the only nontrivial
subrepresentation unequal to $P$ itself. The quotient $P/R$ has degree
$2$, so $P_{>1}=P$. On the other hand, $Q$ has the subrepresentation $T$
that assigns to $V$ the set of skew-symmetric tensors in $V
\otimes V$---i.e., those in the linear span of tensors of
the form $u \otimes v - v \otimes u$ as $u,v$ range through
$V$---and the quotient $Q/T$ is isomorphic to $R$. Now if
$K=\FF_2$, then $R$ has degree $1$, so that $Q_{>1}=T$;
while if $K=\FF_2$, then $R$ has degree $2$, and therefore
$Q_{>1}=Q$.
\end{ex}

We clearly have 
\[ P=P_{>-1} \supseteq P_{> 0} \supseteq \dots \supseteq P_{>d}=\{0\}
\]
where $d=\deg(P)$; and a straightforward check shows that $P_{>0}$ in
this definition agrees with the direct complement $P_{>0}$ of $P_0$ in
Definition \ref{de:DegreeZero}. 

\begin{lm} \label{lm:Degree}
If $\alpha:P \to Q$ is a morphism in the Abelian category of polynomial
representations, then for each $e$ we have $\alpha(P_{>e}) \subseteq Q_{>e}$. 
\end{lm}

\begin{proof}
We compute
\[ P/\alpha^{-1}(Q_{>e}) \cong \im(\alpha)/(\im(\alpha) \cap Q_{>e})
\subseteq Q/Q_{>e}. \]
Since the latter representation has degree at most $e$, so does the first.
The defining property of $P_{>e}$ then implies that 
$P_{>e} \subseteq \alpha^{-1}(Q_{>e})$. This is equivalent to the
statement in the lemma. 
\end{proof}

\begin{lm} \label{lm:LargerSub}
Let $P$ be a polynomial representation, let $e \in \ZZ_{\geq -1}$, and let
$R$ be a subobject of $P_{>e}$, and hence of $P$. Then $(P/R)_{>e}
\cong P_{>e}/R$.
\end{lm}

\begin{proof}
By Lemma~\ref{lm:Degree}, the morphism $P \to P/R$ maps $P_{>e}$ into
$(P/R)_{>e}$, and its kernel on $P_{>e}$ is $R$, so that $P_{>e}/R$ maps
injectively into $(P/R)_{>e}$. To see that it also maps surjectively,
we note that
\[ (P/R)/(P_{>e}/R) \cong P/P_{>e} \]
has degree $\leq e$. Hence $P_{>e}/R$ contains $(P/R)_{>e}$ by
definition of the latter object.
\end{proof}

\subsection{Shifting} 

Just like a univariate polynomial can be shifted
over a constant, and then its leading term does not change, a
polynomial representation can be shifted over a constant vector space, and
we will see that its top-degree part does not change. 

\begin{de} \label{de:Shift}
Given a $U \in \Vec$ and a representation $P:\Vec \to \Vec$, we define the
representation $\Sh_U P$ by $(\Sh_U P)(V):=P(U \oplus V)$ and
$(\Sh_U P)(\phi):=P(\id_U \oplus \phi)$ for $\phi \in \Hom(V,W)$. We call
$\Sh_U P$ the {\em shift of $P$ by $U$}. 
\end{de}

If $P$ is polynomial of degree $\leq d$, then $\Sh_U P$ is also polynomial
of degree $\leq d$; below we will prove a more precise statement. 

We have a morphism $\alpha:P \to \Sh_U P$ in the Abelian category of
polynomial generic representations defined by
$\alpha_V=P(\iota_V):P(V) \to P(U \oplus V)$, where $\iota_V:V \to U
\oplus V$ is the inclusion $v \mapsto 0 + v$. Indeed, that
$(\alpha_V)_V$ is a morphism follows 
from the commutativity of the following diagram, for any $\phi \in
\Hom_\Vec(V,W)$:
\[ \xymatrix{
P(V) \ar[r]^{P(\iota_V)} \ar[d]_{P(\phi)} & P(U \oplus V)
\ar[d]^{P(\id_U \oplus \phi)} \\
P(W) \ar[r]_{P(\iota_W)} & P(U \oplus W),} \]
which in turn follows from the fact that $P$ is a
representation and that
\[ (\id_U \oplus \phi) \circ \iota_V = \iota_W \circ \phi. \]
Similarly, we have a morphism
$\beta:\Sh_U P
\to P$ defined by $\beta_V=P(\pi_V):P(U \oplus V) \to P(V)$, where
$\pi:U \oplus V \to V$ is the projection $u+v \mapsto v$. The relation
$\pi \circ \iota=\id_V$ translates to $\beta \circ \alpha=\id_P$. This
implies that $\Sh_U P$ is the direct sum of $\im(\alpha) \cong P$ and
the polynomial representation $Q:=\ker(\beta)$. 

The following lemma says, informally, that the top-degree part of a
polynomial representation is invariant under shifting. 

\begin{lm} \label{lm:ShiftTop}
Assume that $\deg(P)=d \geq 0$. Then $(\Sh_U P)_{>d-1} \cong P_{>d-1}$.
\end{lm}

\begin{proof}
Using the notation $\alpha$ and $\beta$ from above, we have
$\alpha(P_{>d-1}) \subseteq (\Sh_U P)_{>d-1}$ and $\beta((\Sh_U P)_{>d-1})
\subseteq P_{>d-1}$ by Lemma~\ref{lm:Degree}.  Combining these facts shows
that $\alpha$ maps $P_{>d-1}$ injectively into $(\Sh_U P)_{>d-1}$. To
argue that it also maps surjectively there, it suffices to show that
$(\Sh_U P)/\alpha(P_{>d-1})$ has degree $\leq d-1$.

To see this, we recall that $\Sh_U P=\im(\alpha) \oplus Q$, where
$Q=\ker(\beta)$. Accordingly, 
\[ (\Sh_U P)/\alpha(P_{>d-1}) \cong (\alpha(P)/\alpha(P_{>d-1})) \oplus
Q. \]
Here the first summand on the right is isomorphic to $P/P_{>d-1}$, hence of degree
$\leq d-1$. So it suffices to show that $Q$ has degree $\leq d-1$,
as well. Consider a vector $q \in Q(V)$ and a linear map $\phi \in
\Hom(V,W)$. Then we have 
\begin{align*} 
Q(\phi)(q)&=P(\id_U \oplus \phi)(q)=P(\id_U \oplus \phi)(q) - 
P(\id_U \oplus \phi)(\alpha_V(\beta_V(q))) \\
&= (P(\id_U \oplus \phi)-P(0_U \oplus \phi))(q) 
\end{align*}
where the second equality follows from $\beta_V(q)=0$ and the last
equality follows from the definition of $\alpha$ and $\beta$. Now,
for $\psi$ running through $\Hom(U \oplus V,U \oplus W)$, $P(\psi)$ can be
described by a polynomial map of degree at most $d$. If in this map we
substitute for $\psi$ the maps $\id_U \oplus \phi$ and $0_U \oplus
\phi$, respectively, we obtain the same degree-$d$ parts in $\phi$. Hence the map 
$\phi \mapsto P(\id_U \oplus \phi)-P(0_U \oplus \phi)$ is given by a
polynomial map of degree $\leq d-1$ in the entries of $\phi$. This
shows that $Q$ has degree $\leq d-1$, as desired. 
\end{proof}

\subsection{A well-founded order on polynomial
representations} \label{ssec:Order}

\begin{de}
Given polynomial representations $Q,P: \Vec \to \Vec$, we write $Q \preceq P$
if $Q \cong P$ or else for the largest $e$ such that $Q_{>e} \not \cong
P_{>e}$ the former is a quotient of the latter. We write $Q \prec P$
to mean $Q \preceq P$ and $Q \not \cong P$.
\end{de}

\begin{lm}
The relation $\preceq$ is a well-founded pre-order on polynomial
representations.
\end{lm}

\begin{proof}
Reflexivity is immediate.  To see transitivity, assume $R \preceq Q
\preceq P$. If one of the inequalities is an isomorphism, it follows
immediately that $R \preceq P$. Suppose that they are both not
isomorphisms. Let $e$ be maximal such that $Q_{>e} \not \cong P_{>e}$
and let $e'$ be maximal such that $R_{>e'} \not \cong Q_{>e'}$. If $e'
\geq e$, then $e'$ is maximal such that $R_{>e'} \not \cong P_{>e'}$, and
the former is a quotient of the latter. If  $e' < e$, then $e$ is maximal
such that $(Q_{>e} \cong )R_{>e} \not \cong P_{>e}$, and the former is
a quotient of the latter. In both cases, we find $R \prec P$, as desired.

To see that $\preceq$ is well-founded, suppose we had an infinite chain 
\[ P_1 \succ P_2 \succ \ldots. \] 
To each $P_i$ we associate a length sequence $\ell(P_i) \in \ZZ_{\geq
0}^{\{-1,0,1,2,\ldots\}}$, where $\ell(P_i)(e)$ is the length of any
composition chain of $(P_i)_{>e}$ in the abelian category of polynomial
representations; by Proposition~\ref{prop:FriedlanderSuslin} this length is finite.

Note that $\ell(P_i)(e)=0$ for $e \geq \deg(P_i)$, i.e., $\ell(P_i)$
has finite support. Now $P_i \succ P_{i+1}$ implies that 
$\ell(P_{i+1})$ is lexicographically strictly smaller
than $\ell(P_i)$. Since the lexicographic order on sequences with
finite support is a well-order, we arrive at a contradiction.
\end{proof}

We will intensively use the following construction.  

\begin{ex}
Let $P \neq 0$ be a polynomial representation of degree $d \geq 0$, and let
$R$ be an irreducible sub-object of $P_{>d-1}$. Let $U \in \Vec$ and
set $Q:=\Sh_U P$. By Lemma~\ref{lm:ShiftTop}, $R$ is also naturally
a sub-object of $Q_{>d-1}$, which in turn is a sub-object of $Q$. By
Lemma~\ref{lm:LargerSub}, we have $(Q/R)_{>d-1} \cong Q_{>d-1}/R$,
which in turn is $\cong P_{>d-1}/R$, a quotient of $P_{>d-1}$. Since
$P_{>e}=Q_{>e}=0$ for $e \geq d$, we conclude that $Q/R \prec P$.
\end{ex}

\subsection{The coordinate ring of a polynomial
representation}

\begin{de}
Let $P:\Vec \to \Vec$ be a polynomial representation. We define $K[P]$ as the
contravariant functor from $\Vec$ to $K$-algebras that assigns to $V$
the ring $K[P(V)]$ and to a linear map $\phi:V \to W$ the pullback
$P(\phi)^\#: K[P(W)] \to K[P(V)]$. We call $K[P]$ the 
the {\em coordinate ring of $P$}. 
\end{de}

Note that $P(\phi)^\#$ is an algebra homomorphism; this is
going to be of crucial importance in \S\ref{ssec:Embedding}. 
The coordinate ring comes with a natural ring filtration:
\[ \{0\}=K[P]_{\leq -1} \subseteq K[P]_{\leq 0} \subseteq K[P]_{\leq
1} \subseteq K[P]_{\leq 2} \subseteq \dots \]
where $K[P]_{\leq e}$ assigns to $V$ the space $K[P(V)]_{\leq e}$. 

\begin{lm} \label{lm:Dual}
If $P$ is a polynomial representation of degree $\leq d$, then $V^* \mapsto
K[P(V)]_{\leq e}$ is a polynomial representation of degree $\leq d \cdot e$.
\end{lm}

\begin{proof}
This representation assigns to a linear map $\phi:V^* \to W^*$ the restriction
of the pullback $P(\phi^*)^\# : K[P(V)] \to K[P(W)]$ to $K[P(V)]_{\leq
e}$. Since $P(\phi^*)$ is a linear map, this pullback does indeed map
$K[P(V)]_{\leq e}$ into $K[P(W)]_{\leq e}$, and it does so via a linear
map that is polynomial of degree $\leq e$ in $P(\phi^*)$, hence of degree
$\leq d \cdot e$ in $\phi^*$, which in turn depends linearly on $\phi$.
\end{proof}

\begin{ex} \label{ex:Contravariant}
Let $P=S^2$ and assume $|K|>2$. Take $V=K^n$ with basis $e_1,\ldots,e_n$,
so that $P(V)$ has basis $e_i e_j$ with $i \leq j$. For $k>l$ distinct,
let $g_{kl}(s) \in \End(V)$ be the matrix with $1$'s on the diagonal,
an $s$ on position $(k,l)$, and zeros elsewhere. We have
\begin{align*} 
P(g_{kl}(s)) \sum_{i \leq j} a_{ij} e_i e_j &= \sum_{i \leq
j} a_{ij}(g_{kl}(s) e_i)(g_{kl}(s) e_j)\\ 
&= \sum_{i \leq j} a_{ij} (e_i + \delta_{il} s e_k)(e_j + \delta_{jl} s e_k) \\
&= \sum_{i \leq j} a_{ij} (e_i e_j + s(\delta_{jl}
e_i e_k + \delta_{il} e_k e_j) + s^2 \delta_{il} \delta_{jl} e_k^2)\\
&= \left(\sum_{i \leq j} a_{ij} e_i e_j \right)
+ s \left(
\sum_{i \leq l} a_{il} e_i e_k 
+ 
\sum_{j \geq l} a_{lj} e_k e_j 
\right)
+ s^2 a_{ll} e_k^2
\end{align*}
Observe that by acting with $g_{kl}$ on (linear
combinations of) the basis vectors $e_i e_j$, indices $l$ 
either remain the same or turn into indices $k$.

We now look at the dual. Let $\{x_{ij} \mid i \leq j\}$ be the basis of
$P(V)^*$ dual to the given basis of $P(V)$. Then, for instance, for $l
< i < k$ we have
\[ P(g_{kl}(s))^\# x_{ik}=x_{ik} + s x_{li}, \]
as can be seen by taking the coefficient of $e_i e_k$ in the expression
above. We observe here that indices $k$ either remain the same
or turn into indices $l$. We can also write the above as
\[ P(g_{lk}(s)^T)^\# x_{ik}=x_{ik} + s x_{li}. \]
Note that $P(g)^\#$ is contravariant in $g$, and hence $P(g^T)^\#$
is again covariant. This explains the $V^*$ in Lemma~\ref{lm:Dual}.
\end{ex}

\section{Weight theory} \label{sec:Weights}

In the representation theory of the group scheme $\GL_n$, the weight
space decomposition of a representation, i.e., its decomposition as a
module over the subgroup of diagonal matrices, is of crucial importance.
For the finite group $G=\GL_n(K)$ with $K=\FF_q$, it was already
observed in \cite[Page 129, Remark]{Steinberg16} that weights alone
do not suffice to distinguish the roles played by various vectors in
a representation. The example given there is that the highest weight
vector $1$ in the trivial $\GL_n(K)$-representation $K^1$ and the highest
weight vector $e_1^{q-1}$ in the $(q-1)$st symmetric power $S^{q-1}K^n$
of the standard representation both have weight $(0,\ldots,0)$.  It is
explained there how to act with elements of the group algebra of $G$
to distinguish the two.

In this particular example, we {\em can} distinguish these vectors by
extending the action of (diagonal matrices in) $\GL_n(K)$ to (diagonal
matrices in) $\End(K^n)$, as we do in the context of generic polynomial
representations: the first heighest weight vector then has weight
$(0,\ldots,0)$, while the latter has weight $(q-1,0,\ldots,0)$---see
the definitions below. However, the vector $e_1^{\otimes q}$ in the
$q$-th tensor power cannot be distinguished from the vector $e_1$ in
the standard representation via diagonal matrices. Therefore we, too,
will act with suitable elements of the monoid algebra of $\End(K^n)$
to get a better grasp on weight vectors. However, our focus will not be
on {\em highest} weight vector; rather, we will look for {\em middle}
weight vectors, i.e., weight vectors whose weight is
maximally spread out in a sense that we will make precise
below.

We are by no means the first to study weights in this
context. For instance, they also feature as {\em reduced weights} in
\cite{Kuhn94b}. However, the procedure of maximally spreading out weight
that we introduce below does seem to be new.

\subsection{Multiplicative monoid homomorphisms $K \to K$}

A monoid homomorphism $(K, \cdot) \to (K,\cdot)$ is a map $\phi:K \to
K$ with $\phi(1)=1$ and $\phi(ab)=\phi(a) \phi(b)$ for all $a,b \in
K$. In particular, $\phi$ restricts to a group homomorphism from the
multiplicative group $K^\times:=K \setminus \{0\}$ to itself. Since
$K^\times$ is cyclic, say with generator $g$, the monoid homomorphism
$\phi$ is uniquely determined by its values on $g$ and on $0$. Write
$\phi(g)=g^e$ for a unique exponent $e \in \{1,\ldots,q-1\}$. If $e
\neq q-1$, so that $\phi(g) \neq 1$, then $\phi(0)$ is forced to be
$0$, since otherwise $\phi(g) \cdot \phi(0)$ does not equal $\phi(g
\cdot 0)=\phi(0)$. If $e = q-1$, then there are two possibilities for
$\phi(0)$, namely, $\phi(0)=1$ and $\phi(0)=0$. In the first case,
we will denote $\phi$ by $c \mapsto c^0$, and in the second case, we
denote by $c \mapsto c^{q-1}$. The following is now straightforward.

\begin{lm} \label{lm:MonHomIso}
The monoid of monoid homomorphisms $K \to K$ is isomorphic to the monoid
$\{0,\ldots,q-1\}$ with operation $i \oplus j$ defined by $i \oplus j=i+j$
if $i+j \leq q-1$ and $i \oplus j = i+j-(q-1)$ otherwise.
\end{lm}

Note that this monoid is not cancellative, since $0 \oplus j = (q-1)
\oplus j$ for all $j \in \{1,\ldots,q-1\}$.  Nevertheless, it will be
convenient to have a notation for subtracting elements in the following
sense: for $i \in \{1,\ldots,q-1\}$ and $j \in \{0,\ldots,q-1\}$ we write
$i \omin j$ for the unique element in $\{1,\ldots,q-1\}$ that equals $i-j$
modulo $q-1$.

\subsection{Acting with diagonal matrices}

Let $P:\Vec \to \Vec$ be a polynomial representation and set $V:=K^n$, so
that we may identify $\End(V)$ with the space of $n \times n$-matrices.
Then the monoid $\End(V)$ acts linearly on $P(V)$ via the map $\End(V)
\to \End(P(V)), \phi \mapsto P(\phi)$, and hence so does its submonoid
$D_n \subseteq \End(V)$ of diagonal matrices.

\begin{lm} \label{lm:KMon}
We have
\[ P(V)=\bigoplus_{\chi:D_n \to K} P(V)_\chi \]
where $\chi$ runs over all monoid homomorphisms $(D_n,\cdot) \to
(K,\cdot)$ and where 
\[ P(V)_\chi:=\{p \in P(V) \mid \forall \phi \in D_n:
P(\phi)p=\chi(\phi)p \}. \]
\end{lm}

\begin{proof}
Each element $\phi \in D_n$ satisfies $\phi^q=\phi$, and therefore
also $P(\phi)^q=P(\phi^q)=P(\phi)$. Consequently, $P(\phi)$ is a root
of the polynomial $h=T\cdot (T^{q-1}-1) \in K[T]$.  This polynomial is
square-free, so that $P(\phi)$ is diagonalisable over a separable closure
of $K$. But also the eigenvalues of $P(\phi)$ are roots of $h$, i.e.,
elements of $K$, so $P(\phi)$ is diagonalisable over $K$.  Moreover,
all elements of $D_n$ commute, and therefore so do all elements of
$P(D_n)$. Hence the latter are all simultaneously diagonalisable.
We therefore have
\[ P(V)=\bigoplus_{\chi:D_n \to K} P(V)_\chi \]
where, {\em a priori}, $\chi$ runs through all maps $D_n \to K$. 

Now if $P(V)_\chi \neq 0$, then it follows that
$\chi(\diag(1,\ldots,1))=1$ and $\chi(\phi
\psi)=\chi(\phi) \chi(\psi)$, i.e., $\chi$ is a monoid homomorphism
$D_n \to K$. 
\end{proof}

Note that monoid homomorphisms $D_n \to K$ can be naturally
identified with $n$-tuples of monoid homomorphisms $K \to K$, and
hence, by Lemma~\ref{lm:MonHomIso}, with elements of $\{0,\ldots,q-1\}^n$.
Explicitly, $\chi$ is identified with the tuple $(a_1,\ldots,a_n)$
if $\chi(\diag(t_1,\ldots,t_n))=t_1^{a_1} \cdots t_n^{a_n}$ for all
$(t_1,\ldots,t_n) \in K^n$.

In analogy with the theory of representations of algebraic groups, we will
use the word {\em weight} for monoid homomorphisms $\chi:D_n \to K$, and
we call a vector in $P(V)_\chi$ a {\em weight vector} of weight $\chi$.

We use the notation $\oplus$ also in this context: if
$\chi,\mu \in \{0,\ldots,q-1\}^n$ are weights, then $\chi \oplus
\mu$ is their componentwise sum with respect to $\oplus$. Note that
\[ (\chi \oplus
\mu)(\diag(t_1,\ldots,t_n))=\chi(\diag(t_1,\ldots,t_n)) \cdot
\mu(\diag(t_1,\ldots,t_n)).\]

\begin{ex}
If $U$ is the subspace of $V$ spanned by the first $k$ basis vectors,
then $P(U)$, regarded as a subspace of $P(V)$, is the direct sum of all
$P(V)_\chi$ where $\chi$ runs over the characters in $\{0,\dots,q-1\}^k
\times \{0\}^{n-k}$. In particular, the constant part of $P$ is
$P(0)=P(V)_{(0,\ldots,0)}$.
\end{ex}

\begin{lm} \label{lm:BoundSupport}
Let $\chi=(a_1,\ldots,a_n) \in \{0,\ldots,q-1\}^n$ be a weight such that
$P(K^n)_\chi$ is nonzero. Then $\sum_i a_i$ is at most
$\deg(P)$. 
\end{lm}

\begin{proof}
Choose a nonzero $p \in P(K^n)_\chi$. Then
$P(\diag(t_1,\ldots,t_n))p=t_1^{a_1} \cdots t_n^{a_n} p$, and we
note that $t_1^{a_1} \cdots t_n^{a_n}$ is a reduced polynomial in
$t_1,\ldots,t_n$. On the other hand, $P(\diag(t_1,\ldots,t_n))$ can
be expressed as a reduced polynomial of degree at most $\deg(P)$ in
$t_1,\ldots,t_n$ with coefficients that are linear maps $P(K^n) \to
P(K^n)$. Evaluating this at $p$ yields a reduced polynomial of degree
at most $\deg(P)$ in $t_1,\ldots,t_n$ whose coefficients are elements of
$P(K^n)$. But we already know which polynomial that is, namely $t_1^{a_1} \cdots t_n^{a_n}
p$. Hence $\sum_i a_i \leq \deg(P)$.
\end{proof}

\subsection{Acting with additive one-parameter subgroups}
\label{ssec:Additive}

Let $P:\Vec \to \Vec$ be a polynomial representation, $n \in \ZZ_{\geq 2}$,
and $i,j \in [n]$ distinct. Then we have a one-parameter subgroup
\[ g_{ij}:(K,+) \to \GL_n(K),\  g_{ij}(s):=I+s E_{ij}, \]
where $E_{ij}$ is the matrix with zeroes everywhere except for a $1$
in position $(i,j)$. For $b=0,\ldots,q-1$ we define the linear map
$F_{ij}[b]:P(K^n) \to P(K^n)$ by
\[ F_{ij}[b]p=\text{ the coefficient of $s^b$ in
}P(g_{ij}(s))p, 
\]
where we write $P(g_{ij}(s))p$ as a reduced polynomial in $s$ with
coefficients in $P(K^n)$.

\begin{lm}
For any sub-representation $Q$ of $P$, the linear space $Q(K^n)$ is
stable under $F_{ij}[b]$. 
\end{lm}

\begin{proof}
Let $p \in Q(K^n)$. Then for all $s \in K$ the element
\[ P(g_{ij}(s))p=F_{ij}[0]p + s F_{ij}[1]p + \cdots +
s^{q-1} F_{ij}[q-1]p \]
lies in $Q(K^n)$. The Vandermonde matrix $(s^e)_{s \in K, e \in
\{0,\ldots,q-1\}}$ is invertible, and this implies that each of the
$F_{ij}[e]p$ above are linear combinations of the
$P(g_{ij}(s))p$, and therefore in $Q(K^n)$.
\end{proof}

\begin{lm} \label{lm:OneParameter}
Let $p \in P(K^n)$ be a weight vector of weight $a=(a_1,\ldots,a_n)$,
let $b \in \{0,\ldots,q-1\}$, and set $\tilde{p}:=F_{ij}[b]p$. 
Then we have the following.
\begin{enumerate}
\item $\tilde{p}=p$ for $b=0$;
\item if $a_j=0$, then $\tilde{p}=0$ for $b \neq 0$;
\item if $0<a_j \neq b$, then $\tilde{p}$ is a weight vector of
weight $a \omin (b e_j) \oplus (b e_i)$;
\item if $0<a_j = b$, then $\tilde{p}$ is a sum of a weight
vector of weight 
\[ a \omin b e_j \oplus b e_i 
= (a_1,\ldots,a_i \oplus b,\ldots,q-1,\ldots,a_n) \]
and a weight vector of weight 
\[ a - b e_j \oplus b e_i 
= (a_1,\ldots,a_i \oplus b,\ldots,0,\ldots,a_n). \]
\end{enumerate}
\end{lm}

\begin{proof}
We write 
\[ P(g_{ij}(s))p=p_0 + sp_1 + \cdots + s^{q-1}p_{q-1}. \]
By setting $s$ equal to zero we obtain
$P(g_{ij}(0))p=P(\id_{K^n})p=p$ on the left-hand side, and
$p_0$ on the right-hand side. This proves the first item.

If $a_j=0$, then
\[ P(\diag(1,\ldots,1,0,1\ldots,1)) p = p \]
where the $0$ is on position $j$. Therefore
\[ P(g_{ij}(s)) p = P(g_{ij}(s)
\diag(1,\ldots,1,0,1,\ldots,1)) p =
P(\diag(1,\ldots,1,0,1,\ldots,1)) p \]
does not depend on $s$ and hence $F_{ij}[b] p =0$ for $b
\neq 0$. 

We now assume $a_j>0$. We have $F_{ij}[b] p=p_b$. To determine the
weight(s) appearing in $p_b$, we act on $p_b$ with diagonal matrices. For
$t=(t_1,\ldots,t_n) \in K^n$ and $t_j \neq 0$ we have
\[ 
\diag(t_1,\ldots,t_n) \cdot g_{ij}(s)
= g_{ij}(t_i s t_j^{-1}) \cdot \diag(t_1,\ldots,t_n) \]
and therefore 
\begin{align*} 
\sum_{d=0}^{q-1} s^d P(\diag(t_1,\ldots,t_n)) p_d 
&= P(\diag(t_1,\ldots,t_n) g_{ij}(s)) p \\
&= P(g_{ij}(t_ist_j^{-1}) \diag(t_1,\ldots,t_n)) p \\
&= t_1^{a_1} \cdots t_n^{a_n} \cdot P(g_{ij}(t_i s
t_j^{-1})) p \\
&= t_1^{a_1} \cdots t_n^{a_n} \cdot \sum_{d=0}^{q-1} (t_i s t_j^{-1})^d p_d. 
\end{align*}
Comparing coefficients of $s^b$, we find
\[ P(\diag(t))p_b=t^{a-be_j+be_i} p_b \]
for all $t \in K^{j-1} \times K^\times \times K^{n-j}=:D$. Hence
$p_b$ is a linear combination of weight vectors with weights
that on $D$ agree with the weight $a \omin b e_j \oplus b
e_i$. If $a_j \neq b_j$,
there is only one such weight, namely, $a \omin b e_j \oplus
b e_i$. If $a_j=b$, then there are two such weights, namely,
$a \omin b e_j \oplus b e_i$ and $a - be_j \oplus b e_i$. 
\end{proof}

\subsection{Spreading out weight}

Retaining the notation from \S\ref{ssec:Additive}, 
suppose we are given a nonzero weight vector $p \in P(K^n)$ of weight
$(a_1,\ldots,a_n) \in \{0,\ldots,q-1\}^n$ and a $j \in [n]$ with $a_j >
0$. We construct vectors $\tilde{p} \in P(K^{n+1})$ by identifying
$p$ with $P(\iota)p$, where $\iota:K^n \to K^{n+1}$ is the embedding
adding a $0$ in the last position.  Then $p$ is a vector of weight
$a=(a_1,\ldots,a_n,0)$ in $P(K^{n+1})$, and we compute
\[ \tilde{p}:=F_{n+1,j}[b] p \]
for various $b$. The vector $\tilde{p}$ is guaranteed to be nonzero for
at least two values of $b$, namely, for $b=0$ (in which case
$\tilde{p}=p$), and, as we will now see, for $b=a_j$.
Indeed, in the latter case, by Lemma~\ref{lm:OneParameter}, 
$\tilde{p}$ is the sum of a weight vector $\tilde{p}_0$ of weight $a-a_j
e_j + a_j e_{n+1}$ and a weight vector $\tilde{p}_1$ of
weight $a + (q-1-a_j) e_j + a_j e_{n+1}$.

\begin{lm} \label{lm:Swap}
In the case where $b=a_j$, we have $\tilde{p}_0=P((j,n+1))
p$, where $(j,n+1)$ is short-hand for the permutation matrix corresponding
to the transposition $(j,n+1)$. 
\end{lm}

\begin{proof}
The vector $\tilde{p}_0$ is obtained by applying $P(\pi_j)$ to
$\tilde{p}$, where $\pi_j$ is the projection $K^{n+1} \to K^{n+1}$
that sets the $j$-th coordinate to zero. Furthermore, we have
$P(\pi_{n+1})p=p$, where $\pi_{n+1}$ sets the $(n+1)$st coordinate to
zero. We can then compute $\tilde{p}_0$ as the coefficient of $s^{a_j}$ in
\begin{align*} 
P(\pi_j) P(g_{n+1,j}(s)) p &= P(\pi_j g_{n+1,j}(s)
\pi_{n+1}) p\\ 
&= P((j,n+1))P(\diag(1,\ldots,1,s,1,\ldots,1,0)) p \\
&= P((j,n+1))s^{a_j} p.  \qedhere
\end{align*}
\end{proof}

If either $F_{n+1,j}[b] p \neq 0$ for some $b \neq 0,a_j$ or if
$F_{n+1,j}[a_j]p \neq P((j,n+1)) p$, then we find a new vector $p'$ in
the subrepresentation of $P$ generated by $p$ whose weight has strictly more
nonzero entries---we have {\em spread out the weight} of $p$.

\begin{de}
A nonzero weight vector $p \in P(K^n) \subseteq P(K^{n+1})$
of weight $a \in \{0,\ldots,q-1\}^n $ is called 
{\em maximally spread out} if for all $j \in [n]$
with $a_j>0$ we have
\[ P(g_{n+1,j}(s)) p = p + s^{a_j} P((j,n+1)) p. \qedhere \]
\end{de}

\begin{prop} \label{prop:MaxSpread}
For any nonzero polynomial representation $P$, there exist an $n$ and a nonzero
weight vector $p \in P(K^n)$ that is maximally spread out.
\end{prop}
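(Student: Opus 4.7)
The plan is to define, for any nonzero weight vector $p \in P(K^m)_\chi$ with $\chi = (a_1,\ldots,a_m)$, the \emph{spread} $\sigma(p) := |\{j : a_j > 0\}|$ and to iteratively enlarge it until $p$ is maximally spread out. Since each nonzero entry of $\chi$ is at least $1$, Lemma~\ref{lm:BoundSupport} gives $\sigma(p) \leq \sum_j a_j \leq \deg(P)$, so any strictly increasing sequence of spreads must terminate after at most $\deg(P)$ steps.

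To get started, because $P \neq 0$ there is some $m$ with $P(K^m) \neq 0$, and by Lemma~\ref{lm:KMon} we can pick a nonzero weight vector $p \in P(K^m)_\chi$. Suppose $p$, viewed in $P(K^{m+1})$, is not yet maximally spread out. Then by the definition there exists $j \in [m]$ with $a_j > 0$ such that
\[ P(g_{m+1,j}(s)) p \;\neq\; p + s^{a_j} P((j, m+1)) p. \]
Comparing the two sides as reduced polynomials in $s$, and using item (1) of Lemma~\ref{lm:OneParameter} to dispense with the constant term, at least one of the following must hold: (a) $F_{m+1,j}[b] p \neq 0$ for some $b \in \{1,\ldots,q-1\}\setminus\{a_j\}$; or (b) $F_{m+1,j}[a_j] p \neq P((j,m+1)) p$.

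In case (a), item (3) of Lemma~\ref{lm:OneParameter} shows that $p' := F_{m+1,j}[b] p$ is a nonzero weight vector of weight $\chi \omin b e_j \oplus b e_{m+1}$, whose $j$-th entry is $a_j \omin b \in \{1,\ldots,q-1\}$ and whose $(m+1)$-st entry is $b \neq 0$, so $\sigma(p') = \sigma(p) + 1$. In case (b), item (4) of Lemma~\ref{lm:OneParameter} writes $F_{m+1,j}[a_j] p$ as a sum of two weight vectors of distinct weights, and Lemma~\ref{lm:Swap} identifies the weight-$(\chi - a_j e_j \oplus a_j e_{m+1})$ summand with $P((j,m+1)) p$; hence $p' := F_{m+1,j}[a_j] p - P((j,m+1)) p$ is a nonzero weight vector of weight $\chi \omin a_j e_j \oplus a_j e_{m+1}$ with entries $q-1$ at position $j$ and $a_j$ at position $m+1$, again yielding $\sigma(p') = \sigma(p) + 1$. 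Replacing $p$ by $p'$ and iterating produces a strictly increasing sequence of spreads bounded above by $\deg(P)$, so the process must terminate at a maximally spread out vector. The main subtlety is case (b), where one needs Lemma~\ref{lm:Swap} precisely to isolate $p'$ as a nonzero element of a single weight space rather than a mixture of the two weights appearing in item (4) of Lemma~\ref{lm:OneParameter}.
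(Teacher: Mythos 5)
Your proposal is correct and follows exactly the strategy of the paper's proof: the paper's proof defers the iterative step to "the above discussion," which covers precisely the case analysis into your (a) and (b), then invokes Lemma~\ref{lm:BoundSupport} to terminate the induction. You have simply spelled out the details of the dichotomy and the weight bookkeeping that the paper leaves to the reader.
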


\begin{proof}
Let $p \in P(K^m)$ be a nonzero weight vector. As long as $p$ is not
maximally spread out, by the above discussion we can replace $p$ by a
nonzero weight vector in $P(K^{m+1})$ whose weight has strictly more nonzero
entries. But by Lemma~\ref{lm:BoundSupport}, the number of
nonzero entries is bounded from above by $\deg(P)$. Hence
this process must terminate, with a maximally spread out
vector. 
\end{proof}

\begin{ex} \label{ex:NotSpread}
It is not true that every polynomial representation is generated by
its maximally spread out vectors. Consider, for instance,
$K$ of characteristic $2$ and the representation $Q$ that sends $V$ to the space of symmetric tensors
in $V \otimes V$. The weight vectors in $Q(K^n)$ are of the forms $e_i
\otimes e_i$ and $e_i \otimes e_j + e_j \otimes e_i \in Q(K^n)$ with $i
\neq j$.  Only the latter are maximally spread out. But they generate
the sub-representation of $Q$ consisting of all skew-symmetric tensors
in $V \otimes V$.
\end{ex}

\subsection{The prime field case}

In this section we assume that $q$ is a prime, so that $K$ is a prime
field. We retain the notation from above.

\begin{de}
Let $\iota:K^n \to K^{n+1}$ be the standard embedding and
$F_{n+1,j}=F_{n+1,j}[1]:P(K^n) \to P(K^{n+1})$ be the operator that
sends $p$ to the coefficient of $s^1$ in $P(g_{n+1,j}(s) \circ \iota)(p)$.
\end{de}

\begin{lm} \label{lm:Indep}
Assume that $K$ is a prime field. Then the operator $F_{n+1,j}:P(K^n)
\to P(K^{n+1})$ is injective on the direct sum of all weight spaces
corresponding to weights $\chi=(a_1,\ldots,a_n)$ with $a_j
> 0$, and it is zero on the weight spaces corresponding to
weights with $a_j=0$. 
\end{lm}

\begin{proof}
The last part follows immediately from
Lemma~\ref{lm:OneParameter}; we now prove the first part.
The operator $F_{n+1,j}$ maps the weight space of $\chi$ into that of
$\chi \omin e_j + e_{n+1}$ if $a_j>1$ and into the sum of the weight
spaces with weights $\chi - e_j + e_{n+1}$ and $\chi \omin e_j + e_{n+1}$
if $a_j=1$. Since these weights are distinct for distinct $\chi$, it
suffices to show that $F_{n+1,j}$ is injective on a single weight space,
corresponding to the weight $(a_1,\ldots,a_n)$, where
$a_j>0$. Let $p$ be a nonzero vector in this weight space. 

Define $\phi:K^{n+1} \to K^{n}$ by
\[
\phi(c_1,\dots,c_{n+1}):=(c_1,\ldots,c_j+c_{n+1},\ldots,c_n).
\]
We then have 
\[ \phi \circ g_{n+1,j}(s) \circ \iota=\diag(1,\dots,1+s,\dots,1) \]
and therefore 
\[ P(\phi) P(g_{n+1,j}(s)) P(\iota) p = (1+s)^{a_j} \cdot p. \]
The coefficient of $s^1$ in the latter expression is $a_j \cdot p$, which
is nonzero since $a_j<q$ and $q$ is prime. That coefficient is also equal
to $P(\phi)\tilde{p}$, where $\tilde{p}:=F_{n+1,j}p$. Hence
$\tilde{p} \neq 0$.
\end{proof}

By Lemma~\ref{lm:Indep}, if $\chi=(a_1,\ldots,a_n)$ with $a_j>1$, then
$F_{n+1,j}$ maps $P(K^n)_\chi$ injectively into $P(K^{n+1})_{\chi'}$,
where $\chi'=\chi-e_j+e_{n+1}$. On the other hand, if $a_j=1$, then by
Lemma~\ref{lm:Swap}, $F_{n+1,j}$ followed by projection to the weight
space of $\chi'=(a_1,\ldots,0,\ldots,a_n,1)$ agrees on $P(K^n)_\chi$
with the map $P((n+1,j))r$, which of course we already knew is injective.

\begin{ex}
We note that Lemma~\ref{lm:Indep} is false for non-prime fields. Indeed,
take $K=\FF_4$ and $P=S^2$. Consider the element $p:=e_1^2 \in P(K^1)$,
of weight $(2)$. Now $g_{21}(s)p=(e_1+s e_2)^2=e_1^2 + s^2 e_2^2$,
and hence $F_{2,1}p=0$. On the other hand, if $K=\FF_2$, then $s^2=s$,
and $F_{21}p=e_2^2$.
\end{ex}

\begin{re} \label{re:SpreadOut}
Note that, as a consequence of the lemma, if a weight vector $p$
of weight $(a_1,\ldots,a_n)$ is maximally spread out, then $a_j \in
\{0,1\}$ for all $j$, and moreover $F_{n+1,j}p=P((n+1,j))p$ for all
$j$ with $a_j=1$. Indeed, if $a_j>1$, then $F_{n+1,j} p$ is a weight
vector of weight $(a_1,\ldots,a_j-1,\ldots,a_n,1)$, and if $a_j=1$ but
$F_{n+1,j}p \neq P((n+1,j))p$, then the left-hand side has a component
of weight $(a_1,\ldots,q-1,\ldots,a_n,1)$. In either case, $p$ was not
maximally spread out.
\end{re}

\begin{re}
For $\phi \in \Hom_\Vec(V,W)$ and $P$ a generic
polynomial representation and $p \in P(V)$, we will sometimes just write
$\phi p$ instead of $P(\phi)p$. For instance, we will do this for
$\phi=g_{n+1,j}(s)$. The advantage of this is that we do not need to make
explicit in which polynomial polynomial representation we are computing.
\end{re}

\section{Noetherianity for polynomial representations}
\label{sec:Noetherianity}

\subsection{The main result}

We recall that the tensor testriction theorem, its generalisation to
polynomial representations, and Corollary~\ref{cor:Main}, concern
restriction-closed properties. We will simply use the term
{\em subset} for such a property:

\begin{de} \label{de:Subset}
Let $P$ be a polynomial representation. A {\em subset} of $P$ is the data of a
subset $X(V)$ of $P(V)$ for every $V \in \Vec$, subject to the condition
that for all $V,W,\phi \in \Hom_\Vec(V,W)$, $P(\phi)X(V) \subseteq X(W)$.
\end{de}

\begin{thm}[Noetherianity] \label{thm:Noetherianity}
Let $P$ be a polynomial representation over the finite field $K$. Then
any descending chain
\[ P \supseteq X_1 \supseteq X_2 \supseteq \ldots \]
of subsets stabilises.
\end{thm}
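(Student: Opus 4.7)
The plan is to prove Noetherianity by transfinite induction on $P$ with respect to the well-founded pre-order $\preceq$ of \S\ref{ssec:Order}. The base case $P=0$ is vacuous, so suppose $P \neq 0$ and that the theorem holds for every $Q \prec P$. A preliminary simplification: since $P = P_0 \oplus P_{>0}$ and $P_0(V)$ is the constant finite set $P(0)$, any chain of subsets in $P$ decomposes, and after passing to a cofinal subchain, every member has the same $P_0$-component; hence we may assume $d := \deg P \geq 1$.

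By Proposition~\ref{prop:FriedlanderSuslin}, $P$ has finite length in the abelian category of polynomial representations, so I may choose an irreducible subobject $R \subseteq P_{>d-1}$. Lemma~\ref{lm:LargerSub} gives $(P/R)_{>d-1} \cong P_{>d-1}/R$, a proper quotient of $P_{>d-1}$, so $P/R \prec P$. By the outer induction hypothesis, the chain of images of the $X_n$ in $P/R$ stabilises, and replacing the chain by a tail I may assume all $X_n$ map to a common subset $X' \subseteq P/R$. It then suffices to show that any descending chain of subsets of $P$ all having the same image $X'$ in $P/R$ must stabilise.

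For the remainder I view each $X_n(V) \subseteq P(V)$ as a Zariski-closed subset, which is legitimate because every subset of a finite $K$-vector space is the zero locus of polynomial functions. The vanishing ideals $I_n(V) \subseteq K[P(V)]$ all contain the pullback of the vanishing ideal of $X'$, and by Lemma~\ref{lm:Dual} the filtered pieces $K[P]_{\leq e}$ form polynomial representations on which the spreading operators of \S\ref{sec:Weights} act. Assume for contradiction the chain does not stabilise; then infinitely often there is a function $f_n \in I_{n+1}(V_n) \setminus I_n(V_n)$. I perform an \emph{inner} induction on the minimal degree $e$ of such a witness across all non-stabilisation steps, where degree $e=0$ is impossible since the chain projects to $X'$.

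The heart of the argument is the forthcoming embedding theorem of \S\ref{ssec:Embedding}. Applying the spreading operators of \S\ref{sec:Weights} to $f_n$ and invoking Proposition~\ref{prop:MaxSpread}, I may assume $f_n$ is a maximally spread out weight vector; by Remark~\ref{re:SpreadOut} (in the prime-field case; the general case requires the extra bookkeeping developed in the preceding subsections) its weight is $(1,\ldots,1,0,\ldots,0)$. Such a function is affine-linear in a specified set of coordinate directions coming from $R$, so it splits the family $X_n$ into two restriction-closed pieces: on the locus where the $R$-linear part of $f_n$ vanishes identically, $X_n$ embeds functorially into (a shift of) a polynomial representation strictly smaller than $P$ in $\preceq$, and the outer induction applies; on the complement, the affine-linear equation expresses an $R$-coordinate in terms of the $P/R$-coordinates, so we have strictly reduced the degree of a non-vanishing equation, and the inner induction applies. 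Combining the two stabilisation conclusions yields stabilisation of the original chain, a contradiction.

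The main obstacle, and the focus of the rest of the section, is the embedding theorem itself: the naive construction of a spread-out weight vector need not be affine-linear in the $R$-direction globally, and one has to use the contravariant structure on $K[P]$ together with the irreducibility of $R$ (Remark~\ref{re:Irred}) to ensure that the coordinate one isolates genuinely lies in the $R$-subrepresentation. Handling the non-prime-field case, where Lemma~\ref{lm:Indep} fails, requires a further refinement via the monoid $\{0,\ldots,q-1\}$ of Lemma~\ref{lm:MonHomIso} and careful use of the full operators $F_{ij}[b]$ rather than just $F_{ij}[1]$.
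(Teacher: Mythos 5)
Your outline follows the paper's strategy closely: outer induction on $P$ along $\prec$, modding out an irreducible $R \subseteq P_{>d-1}$, fixing the projection $X'$ in $P/R$ via the outer induction, and then an inner induction on the degree of an equation separating $X$ from $\pi^{-1}(X')$, using the embedding theorem to split into a locus that embeds into a $\prec$-smaller representation and a locus cut out by an equation of smaller degree. However, there are two concrete problems.

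First, the prime-field reduction is handled incorrectly. Lemma~\ref{lm:Indep} and Remark~\ref{re:SpreadOut}, on which your appeal to maximally-spread-out weight vectors rests, are only proved when $K$ is a prime field. You acknowledge this parenthetically and then claim the general case "requires a further refinement via the monoid $\{0,\ldots,q-1\}$ \dots and careful use of the full operators $F_{ij}[b]$," as if that refinement were carried out elsewhere. It is not: the paper explicitly states that it does not attempt to adapt the embedding theorem to non-prime fields. Instead it reduces the whole Noetherianity statement to the prime-field case beforehand via Proposition~\ref{prop:PrimeField}, by restriction of scalars from $K$ to its prime field $F$ (sending $P$ to $P_F(U):=(P(K\otimes_F U))_F$ and a chain in $P$ to a chain in $P_F$). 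Without this reduction, or the unwritten refinement you gesture at, your argument only proves the theorem for prime fields.

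Second, the two cases in the final step are described with their roles swapped. Writing the separating equation (after spreading out) in the form $f=f_0+f_1\,y_0$ with $f_1\in K[P']$ and $y_0$ an $R$-coordinate, the embedding into a $\prec$-smaller representation happens on the locus where $h:=f_1$ is \emph{nonzero}—there one can eliminate $R$-coordinates and, using irreducibility of $R$, show $(\Sh_U X)[1/h]\to(\Sh_U P)/R$ is injective. On the complementary locus $Y$, where $h$ vanishes identically, one instead gets a smaller $\delta$ (or a smaller $X'$). You have stated the opposite: that the embedding occurs where the $R$-linear part vanishes, and the degree drop on the complement. As written, neither case of your dichotomy would go through; the patching argument at the end (using both the stabilisation of $(X_i\cap Y)_i$ and of $((\Sh_U X_i)[1/h])_i$) depends on having these two roles the right way around.

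As a smaller point, your inner induction "on the minimal degree of a witness $f_n\in I_{n+1}(V_n)\setminus I_n(V_n)$ across all non-stabilisation steps" is not obviously a well-founded quantity that decreases; the paper instead runs a clean inner induction over the well-founded order comparing $(X',\delta_X)$ pairs, which is what makes the reduction step unambiguous.
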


\subsection{Irreducible decomposition of restriction-closed
tensor properties}

Before proceeding with the proof of Noetherianity, we deduce from it
the fact that any subset of $P$ admits a unique decomposition into
irreducible subsets.

\begin{de}
Let $P$ be a polynomial generic representation over the
finite field $K$ and let $X$ be a subset of $P$. We call $X$
{\em irreducible} if $X(0) \neq \emptyset$ and if whenever
$X_1,X_2$ are subsets of $P$ such that $X(V)=X_1(V) \cup
X_2(V)$ holds for all $V \in \Vec$, it follows that $X=X_1$ or $X=X_2$. 
\end{de}

\begin{thm} \label{thm:Irreducibility}
For any subset $X$ of a polynomial representation $P$ over
the finite field $K$, there is a unique decomposition 
\[ X=X_1 \cup \ldots \cup X_k \]
where all $X_i$ are irreducible and none is contained in any other.
\end{thm}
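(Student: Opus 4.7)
The plan is to mimic the classical irreducible decomposition of a Noetherian topological space, using the descending chain condition supplied by Theorem~\ref{thm:Noetherianity} in place of Zariski-Noetherianity. First I would record some book-keeping: the class of subsets of $P$ in the sense of Definition~\ref{de:Subset} is closed under finite unions and finite intersections via the pointwise formulas $(X \cup Y)(V):=X(V) \cup Y(V)$ and $(X \cap Y)(V):=X(V) \cap Y(V)$, both of which manifestly respect every $P(\phi)$. Moreover, a subset $X$ is empty (meaning $X(V)=\emptyset$ for every $V$) if and only if $X(0)=\emptyset$, since applying $P$ to the unique map $V \to 0$ forces any element of $X(V)$ into $X(0)$.

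For existence I would use Noetherian induction. Let $\mathcal{F}$ denote the collection of subsets of $P$ that do not decompose as a finite (possibly empty) union of irreducible subsets. If $\mathcal{F}$ were nonempty, Theorem~\ref{thm:Noetherianity} would furnish a minimal element $Y \in \mathcal{F}$ via iterated strict descent. Since the empty subset is the empty union, $Y$ is nonempty, so $Y(0) \neq \emptyset$; membership in $\mathcal{F}$ keeps $Y$ from being irreducible, so the definition supplies proper subsets $Y_1,Y_2 \subsetneq Y$ with $Y = Y_1 \cup Y_2$. Minimality of $Y$ forces $Y_1,Y_2 \notin \mathcal{F}$, so each $Y_i$ admits a finite irreducible decomposition, and concatenating these produces one for $Y$, contradicting $Y \in \mathcal{F}$. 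Finally, in any such decomposition I would discard every $X_i$ that is contained in some $X_j$ with $j \neq i$, arriving at an irredundant one.

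For uniqueness, I would take two irredundant irreducible decompositions $X=X_1 \cup \cdots \cup X_k = Y_1 \cup \cdots \cup Y_l$ and fix an index $i$. Writing $X_i = \bigcup_{j=1}^l (X_i \cap Y_j)$ as a finite union of subsets of $P$, the binary condition in the definition of irreducibility, upgraded to the finite case by grouping $Z_1 \cup (Z_2 \cup \cdots \cup Z_m)$, yields some $j$ with $X_i = X_i \cap Y_j$, i.e.\ $X_i \subseteq Y_j$. A symmetric argument produces $i'$ with $Y_j \subseteq X_{i'}$, and irredundancy applied to $X_i \subseteq X_{i'}$ forces $i = i'$, whence $X_i = Y_j$. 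This pairing is the desired bijection between the components of the two decompositions.

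No serious obstacle arises once Theorem~\ref{thm:Noetherianity} is available; the only step meriting a moment of care is the upgrade from the binary irreducibility condition in the definition to the finite version used in the uniqueness argument, but this is routine induction. The entire proof amounts to transporting the standard Noetherian-topological irreducible decomposition to the category of ``subsets of $P$'' introduced in Definition~\ref{de:Subset}.
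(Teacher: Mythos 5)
Your proof is correct and follows exactly the route the paper indicates: the paper itself just says the argument is ``identical to the proof that any Noetherian topological space admits a unique decomposition into irreducible closed subspaces,'' and you have simply written out that standard Noetherian induction for existence and the $X_i=\bigcup_j (X_i\cap Y_j)$ argument for uniqueness in the category of subsets of $P$. The book-keeping observations you include (closure under finite unions and intersections, and that $X$ is empty iff $X(0)=\emptyset$, which ties the paper's $X(0)\neq\emptyset$ clause in the definition of irreducibility to genuine nonemptiness) are exactly the right preliminaries.
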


\begin{proof}
This is an immediate consequence of
Noetherianity~\ref{thm:Noetherianity}, and the proof is 
identical to the proof that any Noetherian topological space
admits a unique decomposition into irreducible closed subspaces.
\end{proof}

For an instructive example, we need the following lemma.

\begin{lm} \label{lm:Pure}
Suppose that $P(0)=0$. Then the subset $X:=P$ is irreducible in the sense
above. 
\end{lm}

We note that the requirement that $P(0)$ be zero is necessary for
irreducibility; otherwise, one can take any partition $S_1 \sqcup S_2$
of the finite set $P(0)$ into two nonempty parts, define $X_i(V)$ to
be the set of elements in $P(V)$ that map into the $S_i$, and note that
$P=X_1 \cup X_2$.

\begin{proof}
Suppose that $X=X_1 \cup X_2$ where $X_i \subsetneq X$
for $i=1,2$. Then $X_1$ has at least one forbidden restriction $T_1
\in P(V_1)$, and $X_2$ has at least one forbidden restriction $T_2 \in
P(V_2)$. Let $\iota_i: V_i \to V_1 \oplus V_2$ be the canonical inclusion,
and write $T:=P(\iota_1)(T_1) + P(\iota_2)(T_2)$. Let $\pi_i:V_1 \oplus
V_2 \to V_i$ be the projection. Then $P(\pi_1 \circ \iota_2)=P(0_{V_2
\to V_1})$, which is the zero map since it factors via $P(0)=0$; and
similarly $P(\pi_2 \circ \iota_1)=0$. We conclude that 
\[ T_1=P(\pi_1 \circ \iota_1)T_1 = P(\pi_1)(P(\iota_1)T_1 +
P(\iota_2)T_2)=P(\pi_1)T, \]
so $T_1$ is a restriction of $T$. Similarly, $T_2$ is a restriction
of $T$. It follows that $T$ lies neither in $X_1(V_1 \oplus V_2)$ nor
in $X_2(V_1 \oplus V_2)$, a contradiction. Hence $X$ is irreducible as
claimed. 
\end{proof}

\begin{ex}
For any $r \in \RR_{\geq 0}$, let $X_r$ be the locus in $T^d$ where the
{\em partition rank} is at most $r$, and let $Y_r$ be the locus in $T^d$
where the {\em analytic rank} is at most $r$.

For $X_r$ (with $r$ an integer) it is easy to write down a decomposition
into irreducible subsets: for any of the $2^{d-1}-1$ unordered partitions
$\{I,J\}$ of $[d]$ into two nonempty sets, choose a number $r_{\{I,J\}}$
such that these numbers add up to $r$. This choice gives a natural map
\[ \prod_{\{I,J\}} (T^{|I|} \times T^{|J|})^{r_{\{I,J\}}} \to T^d \]
parameterising the locus in $X_r$ of tensors with a partition rank $\leq
r$ decomposition of a fixed type; this image is irreducible by virtue
of Lemma~\ref{lm:Pure} applied to the left-hand side above.
The total number of components of $X_r$
that we find is the number of ways of partitioning $r$ into $2^{d-1}-1$
nonnegative integers, which is polynomial in $r$. 

Given the (almost) linear relation between partition rank and analytic
rank \cite{Cohen21,Moshkovitz22}, it is natural to ask whether the number
of components of $Y_r$, too, is polynomial in $r$.
\end{ex}

\subsection{The vanishing ideal of a subset}

We will prove Noetherianity by looking at functions that
vanish identically on a subset.

\begin{de}
Given a subset $X \subseteq P$, we denote by $I_X(V) \subseteq K[P(V)]$
the ideal of all functions $P(V) \to K$ that vanish
identically on $X(V)$.  
\end{de}

We stress that conversely, since $K$ is finite, $X(V)$ is also the set of
all common zeros of $I_X(V)$ in $P(V)$. 

\subsection{Shifting and localising}

Definition~\ref{de:Shift} can be extended to subsets of
polynomial representations.

\begin{de}
Given a subset $X$ of a polynomial representation $P$ and a $U \in
\Vec$, the shift $\Sh_U X$ is the subset of $\Sh_U P$ defined by $(\Sh_U
X)(V):=X(U \oplus V)$.
\end{de}

\begin{de}
Given a subset $X$ of $P$ and a function $h \in K[P(0)]$, we can think of
$h$ as a function on any $P(V)$ via pullback along the linear map $P(V)
\to P(0)$, and hence also as a function on $X(V)$. We define $X[1/h]$
as the functor
\[ V \mapsto X[1/h](V):=\{p \in X(V) \mid h(p) \neq 0\}.
\qedhere \]
\end{de}

Clearly, $X[1/h]$ is a subset of $P$. 

We will often combine a shift and a localisation: given a function $h
\in K[P(U)]$, we can think of $h$ as a function on $K[(\Sh_U P)(0)]$,
and hence localise. 

\begin{ex} \label{ex:Matrices}
Let $P:V \to V \otimes V$ and let $X(V)$ be the set of
tensors (matrices) of rank $\leq n$ in $P(V)$. Let $h \in K[P(K^n)]$ be the $n \times n$-determinant
and set $U:=K^n$. Then $(\Sh_U X)[1/h]$ is isomorphic to the functor
that sends $V$ to $B \times V^{2n}$, where $B:=X(U)[1/h]$ is the set of
invertible $n \times n$-matrices, and the isomorphism $(\Sh_U X)[1/h](V)
\to B \times V^{2n}$ comes from observing that
\[ (\Sh_U X)[1/h](V) \subseteq P(U \oplus V)=(K^n \otimes
K^n) \times (K^n \otimes V) \times (V \otimes K^n) \times (V
\otimes V), \]
and realising that the $V \otimes V$-component of a matrix of rank
$\leq n$ is completely determined by its remaining three components,
provided that the $K^n \otimes K^n$-component has nonzero determinant.
This phenomenon, that $X$ becomes an affine space up to shifting and
localising, holds in greater generality, at least at a
counting level; see Corollary~\ref{cor:Shift}.
\end{ex}

\subsection{Reduction to the prime field case}

\begin{prop} \label{prop:PrimeField}
Suppose that Theorem~\ref{thm:Noetherianity} holds when $K$
is a prime field. Then it also holds when $K$ is an
arbitrary finite field.
\end{prop}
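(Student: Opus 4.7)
Write $K = \FF_q$ with $q=p^r$ for some prime $p$. The plan is to view any polynomial representation $P$ over $K$ as a polynomial representation $P'$ over $\FF_p$ via restriction of scalars, translate chains of subsets accordingly, and then invoke the prime-field hypothesis.

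Concretely, I would define $P'$ on the category of finite-dimensional $\FF_p$-vector spaces by
\[ P'(W) := P(K \otimes_{\FF_p} W), \qquad P'(\phi) := P(\id_K \otimes_{\FF_p} \phi), \]
where on the right-hand side we remember only the $\FF_p$-vector space structure. The map $\phi \mapsto \id_K \otimes \phi$ from $\Hom_{\FF_p}(W_1, W_2)$ into $\Hom_K(K \otimes W_1, K \otimes W_2)$ is $\FF_p$-linear, and $P$ is polynomial of degree $\leq d := \deg(P)$ on $K$-linear maps, so $\phi \mapsto P'(\phi)$ is polynomial of degree $\leq d$ in the $\FF_p$-entries of $\phi$; equivalently, $\dim_{\FF_p} P'(\FF_p^n) = r \cdot \dim_K P(K^n)$ is a polynomial in $n$, so $P'$ is polynomial over $\FF_p$ by Theorem~\ref{thm:Kuhn}.

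From any subset $X \subseteq P$ I would then form $X'$ by setting $X'(W) := X(K \otimes_{\FF_p} W)$. Since $\id_K \otimes \phi$ is $K$-linear, $P(\id_K \otimes \phi)$ preserves $X$, so $X'$ is indeed a subset of $P'$ over $\FF_p$. The key observation is that $X \mapsto X'$ is strictly inclusion-preserving: every $V \in \Vec$ of $K$-dimension $n$ is $K$-linearly isomorphic to $K \otimes_{\FF_p} \FF_p^n$, and since a subset of $P$ is transported by $P$ of any such isomorphism (and its inverse), a strict inclusion $X_1(V) \supsetneq X_2(V)$ translates into $X_1'(\FF_p^n) \supsetneq X_2'(\FF_p^n)$.

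Applied to a chain $P \supseteq X_1 \supseteq X_2 \supseteq \cdots$, this produces a chain $P' \supseteq X_1' \supseteq X_2' \supseteq \cdots$, which stabilises by the prime-field case of Theorem~\ref{thm:Noetherianity} applied to $P'$; strict inclusion-preservation then forces the original chain to stabilise as well. The only checks with any substance are the polynomiality of $P'$ over the prime field and the transport of subsets along $K$-linear isomorphisms, and neither presents a real obstacle once the restriction-of-scalars construction is in place.
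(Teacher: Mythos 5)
Your proposal is correct and follows essentially the same approach as the paper: restriction of scalars to the prime field $\FF_p$, defining $P'(W):=P(K\otimes_{\FF_p}W)$ and $X'(W):=X(K\otimes_{\FF_p}W)$, and transporting the chain. The only cosmetic difference is that you justify polynomiality of $P'$ both directly and via the dimension characterisation from Theorem~\ref{thm:Kuhn}, whereas the paper simply asserts it; your use of the dimension criterion is a clean way to sidestep the bookkeeping of expressing $K$-coefficients over an $\FF_p$-basis.
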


\begin{proof}
Let $F$ be the prime field of $K$ and set $e:=\dim_F K$. For
an $n$-dimensional $K$-vector space $U$, we write $U_F$ for the
$e \cdot n$-dimensional $F$-vector space obtained by restricting the scalar
multiplication on $U$ from $K \times U \to U$ to $F \times U \to U$.

Now let $P$ be a polynomial representation over $K$. Define a generic
representation $P_F$ over $F$ by setting, for a finite-dimensional
$F$-vector space $U$, $P_F(U):=(P(K \otimes_F U))_F$, and sending an
$F$-linear map $\phi:U \to V$ to the map $P_F(\phi)$, which is $K$-linear
and therefore also $F$-linear. It is easy to see from the definitions
that $P_F$ is polynomial of the same degree as $P$.

For a subset $X$ of $P$, we define a subset of $P_F$ via
$X_F(U):=X(K \otimes_F U)$. If $X_1 \supseteq X_2 \supseteq \ldots$ is a
chain of subsets in $P$, then $(X_1)_F \supseteq (X_2)_F \supseteq \ldots$
is a chain of subsets in $P_F$. By assumption, the latter
stabilises, say at $(X_{n_0})_F$. Then it follows that, for any
$n \geq n_0$ and any $m$, 
\[ X_{n}(K^m)=X_{n}(K \otimes_F F^m)=(X_{n})_F(F^m)
=(X_{n_0})_F(F^m)=X_{n_0}(K^m), \]
and this suffices to conclude that $X_n=X_{n_0}$. 
\end{proof}

\begin{center}
\fbox{\parbox{.8\textwidth}{
{\bf In view of Proposition~\ref{prop:PrimeField}, from
now on
we assume
that $K$ is a prime field.}}} 
\end{center}

An important reason for this assumption is that we can
then use Lemma~\ref{lm:Indep}. We believe that the proof below can be
adapted to arbitrary finite fields, and this might actually give more
general results. 
In particular, in the proof below we will act with the operators
$F_{n+1,j}=F_{n+1,j}[1]$; and in the general case we would have to work
with the operators $F_{n+1,j}[b]$ for $b \in \{1,\ldots,q-1\}$.  But the
reasoning below is already rather subtle, and we prefer not to make it
more opaque by the additional technicalities coming from non-prime fields.

\subsection{The embedding theorem} \label{ssec:Embedding}

We will prove Theorem~\ref{thm:Noetherianity} via an auxiliary result of
independent interest. Let $P$ be a polynomial representation of positive degree
$d$ and let $R$ an irreducible subobject of $P_{>d-1}$. Let $\pi:P
\to P/R=:P'$ be the projection. Dually, this gives rise to an embedding
$K[P/R] \subseteq K[P]$. For a fixed $V \in \Vec$, if we choose elements
$y_1,\ldots,y_n \in P(V)^*$ that map to a basis of $R(V)^*$, then we can
write elements of $K[P(V)]$ as reduced polynomials in $y_1,\ldots,y_n$
with coefficients that are elements of $K[P'(V)]$. We note, however,
that $R$ is typically not a direct summand of $P$. This implies, for
instance, that when acting with $\End(V)$ on $y_i$, we typically do not
stay within the linear span of the $y_1,\ldots,y_n$ but also get terms
that are linear functions in $K[P'(V)]$.

Let $X$ be a subset of $P$, and let $X'$ be the image of $X$ in $P/R$,
i.e. $X'(V):=\pi(X(V))$ (to simplify notation, we write $\pi$ instead
of $\pi_V$).

Now there are two possibilities: 
\begin{enumerate}
\item $X=\pi^{-1}(X')$, i.e., $X(V)=\pi^{-1}(X'(V))$ for all $V$. In
this case, $I_X$ is generated by $I_{X'} \subseteq K[P'] \subseteq K[P]$.

\item there exists a space $V$ and an element $f \in I_X(V)$ such that
$f$ does not lie in $K[P] \cdot I_{X'}$. 
\end{enumerate}

\begin{thm}[Embedding theorem]
Assume, as above, that $K$ is a prime field. From any $f \in I_X(V)
\setminus K[P(V)] \cdot I_{X'}(V)$, we can construct a $U \in \Vec$ and
a polynomial $h$ in $K[P(U)]$ of degree strictly smaller than that of
$f$, such that also $h$ does not vanish identically on $\pi^{-1}(X'(U))$
and such that the projection $\Sh_U P \to (\Sh_U P)/R$ restricts to
a injective map on $(\Sh_U X)[1/h]$.
\end{thm}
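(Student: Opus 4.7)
The plan is to transfer $f$ into the polynomial-representation structure on the coordinate ring, apply the spreading machinery of Section~\ref{sec:Weights} to produce a weight-$(1,\ldots,1)$ vanishing relation, and then read off $h$ from its affine-linear expansion in the $R$-direction.

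First, I would set up the representation. By Lemma~\ref{lm:Dual}, $V^* \mapsto K[P(V)]_{\leq e}$ is a polynomial representation for each $e$, and the functoriality of $X$ and $X'$ makes both $I_X$ and $N := K[P]\cdot I_{X'}$, intersected with $K[P]_{\leq e}$, into subrepresentations. Setting $e := \deg f$, the quotient $M := (I_X \cap K[P]_{\leq e})/(N \cap K[P]_{\leq e})$ is a polynomial representation in which the image $\bar f$ of $f$ is nonzero by hypothesis. Decomposing $\bar f$ into weight components and starting from a nonzero one, I would then apply the spreading procedure underlying Proposition~\ref{prop:MaxSpread} to obtain a maximally spread-out weight vector $\bar g \in M(K^N)$ for some $N$; since $K$ is prime, Remark~\ref{re:SpreadOut} forces the weight $\chi$ of $\bar g$ into $\{0,1\}^N$, with support of size $k \leq \deg P$, and I would set $U := K^k$. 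The spreading operators $F_{n+1,j}[b]$ come from substitutions $e_j \mapsto e_j + s e_{n+1}$ that do not raise the polynomial degree; combined with weight $0$ outside the support (which allows me to replace any lift of $\bar g$ by its pullback from $P(U)$ modulo $N$) and a final projection to the weight-$(1,\ldots,1)$ component, I arrive at $g \in K[P(U)]$ with $g \in I_X(U)$, $g \notin N(U)$, $\deg g \leq \deg f$, and $g$ of weight exactly $(1,\ldots,1)$.

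The crucial step, and in my view the main obstacle, is to turn this weight condition into an affine-linear expansion of $g$ in the $R$-direction modulo $N(U)$. Choosing a weight basis $y_1,\ldots,y_m$ of $R(U)^*$ lifted to weight vectors in $P(U)^*$, every element of $K[P(U)]$ has a unique reduced expansion $\sum_\alpha g_\alpha y^\alpha$ with $g_\alpha \in K[P'(U)]$. I plan to argue that no $y^\alpha$ with $|\alpha| \geq 2$ can contribute to a weight-$(1,\ldots,1)$ element: each $y_i$ has weight whose entries sum to $\deg R = \deg P$, so such an $y^\alpha$ has pre-reduction weight-sum $\geq 2\deg P > k$, which, together with the requirement of zero weight in every coordinate outside the support $[k]$, is incompatible with the $\oplus$-monoid arithmetic of Lemma~\ref{lm:MonHomIso}. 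Thus
\[ g \equiv b_0 + \sum_{i=1}^m b_i\, y_i \pmod{N(U)}, \]
with $b_0, b_i \in K[P'(U)]$ of polynomial degree at most $\deg f - 1$. Because $g \notin N(U)$, some $b_i$ must lie outside $I_{X'}(U)$; I would set $h := b_i$. Then $\deg h < \deg f$, and $h$ does not vanish identically on $\pi^{-1}(X'(U))$, for otherwise $h \in I_{X'}(U)$.

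Finally, to verify injectivity of $\pi$ on $(\Sh_U X)[1/h]$, I would take $p, q \in (\Sh_U X)[1/h](V)$ with $\pi(p) = \pi(q)$ and write $q = p + r$ with $r \in R(U\oplus V)$. For each linear $\phi: U\oplus V \to U$, pulling back the displayed relation along $\phi$ and evaluating at $p$ and at $q$, the $K[P'(U)]$-coefficients agree (they depend only on the common image $\pi(P(\phi)p) = \pi(P(\phi)q)$), so subtracting yields
\[ \sum_{i=1}^m b_i(P(\phi)p)\cdot y_i(R(\phi)r) \;=\; 0. \]
Taking $\phi = \pi_U$ and using $h(P(\pi_U)p) \neq 0$ gives a nontrivial linear constraint on $R(\pi_U)r$, and varying $\phi$ over $\Hom(U \oplus V, U)$ together with the swap identities $F_{k+1,j}[1]\bar g = P((j,k+1))\bar g$ from Remark~\ref{re:SpreadOut} generates a whole family of such constraints. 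The hardest step will be showing that these constraints, all coming from the single localisation $\{h \ne 0\}$, really do force $r = 0$; this I expect to follow from the irreducibility of $R$, which via Proposition~\ref{prop:FriedlanderSuslin} implies that the Schur-algebra orbit of the $y$-component of $\bar g$ surjects onto $R(U\oplus V)^*$.
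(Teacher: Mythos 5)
Your overall plan---pass to the quotient $M = (I_X)_{\leq e}/(K[P]\cdot I_{X'})_{\leq e}$, spread out to reach weight $(1,\ldots,1)$, read off $h$ as a coefficient of a $\by$-variable, and close with the irreducibility of $R$---is in the same spirit as the paper's proof. But the step you identify as crucial, the passage from ``weight $(1,\ldots,1)$'' to ``affine-linear in $\by$ modulo $N(U)$,'' contains a genuine gap, and the justification you give for it is incorrect.

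You claim that $y^\alpha$ with $|\alpha|\geq 2$ cannot appear in a weight-$(1,\ldots,1)$ element because each $y_i$ has weight summing to $\deg R = \deg P$ and hence $y^\alpha$ has ``pre-reduction weight-sum $\geq 2\deg P > k$.'' This fails in several ways. First, the weight entries of weight vectors in $R(K^n)^*$ only satisfy $\sum_i a_i \leq \deg R$ (Lemma~\ref{lm:BoundSupport}); there is no reason for equality, and in particular it need not be that $\deg R = \deg P$. Second, the weights of monomials are computed in the monoid $(\{0,\ldots,q-1\}^n,\oplus)$, where $\oplus$ wraps around modulo $q-1$; products can therefore have \emph{smaller} weight sums than the integer sum of factor-weights. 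For instance over $K=\FF_2$, $\oplus$ is Boolean OR, so $y_1 y_2$ for two $\by$-variables of disjoint supports inside $[k]$ is a weight vector of weight $(1,\ldots,1,0,\ldots,0)$ and can perfectly well appear in a weight-$(1,\ldots,1)$ element. Third, the inequality $2\deg P > k$ is neither verified nor generally true: $k$ is bounded only by the degree of the representation $V^*\mapsto K[P(V)]_{\leq\delta}$, which is $\leq d\delta$ and so can exceed $2d$ whenever $\delta\geq 2$. So a weight condition alone does not force affine-linearity.

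What the paper's proof does instead, and what your argument is missing, is to \emph{use} the maximality of the spreading modulo $N$ beyond the statement that the weight is $(1,\ldots,1)$: if $\bar f$ is maximally spread out in $M$, then the component $\tilde f_1$ of $F_{n+1,j}f$ of weight $(1,\ldots,q-1,\ldots,1,1)$ must vanish identically on $\pi^{-1}(X')$ (otherwise one could spread further in $M$). Combined with a $\by$-expansion of $f$ that is reduced relative to $I_{X'}$, this is what forces, after choosing $y_0$ of minimal weight support, that the top $y_0$-power $\ab$ equals $1$ and that the leading coefficient $f_\ab$ has weight complementary to that of $y_0$. The resulting decomposition $f = \hat f_1\hat y_1 + \cdots + \hat f_N\hat y_N + r$ is not affine-linear in all of $\by$ (the remainder $r$ can still involve $\by$-variables, just only those with weight touching $[m]$), which is another respect in which your claimed structure is stronger than what actually holds; but it is exactly strong enough because those remaining $\by$-variables define functions factoring through $(\Sh_U P)(W)/R(W)$. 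Your choice $U := K^k$ (the full weight support) also differs from the paper's $U := K^m$ (one half of a complementary pair), and the clean injectivity argument depends on this: the coefficients $\hat f_i$ lie in $K[P(U)]$ and are therefore constant along $R(W)$-cosets, so one can subtract the two evaluations directly and invoke the irreducibility of $R(K^{n-m})^*$ as an $\End(K^{n-m})$-module, without pulling back along arbitrary $\phi$ as you do. In short, the skeleton of your argument is right, but the pivotal affine-linearity step would need to be replaced by the paper's analysis of $\tilde f_1$ and of the $y_0$-expansion to actually close the proof.
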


Here $(\Sh_U X)(V):=X(U \oplus V) \subseteq P(U \oplus V)=(\Sh_U P)(V)$
and $(\Sh_U X)[1/h]$ is the subset of $\Sh_U$ consisting of points $p$
where $h(p) \neq 0$. A warning here is that $h$ may actually vanish
identically on $X(U)$, in which case the conclusion is trivial because
$(\Sh_U X)[1/h]$ is empty. But in our application to the Noetherianity
theorem, this will be irrelevant.

\subsection{Proof of Noetherianity from the embedding theorem}

\begin{proof}
Proceeding by induction on $P$ along the partial order from
\S\ref{ssec:Order}, we may assume that Noetherianity holds for every
representation $Q \prec P$; we call this the {\em outer induction
assumption}. 

Let $d$ be the degree of $P$. If $d=0$, then $P(V)$ is a fixed finite set,
and clearly any chain of subsets stabilises. So we may assume that $d>0$.

Let $R$ be an irreducible sub-representation in the subrepresentation
$P_{>d-1}$ of $P$. Given a subset $X$ of $P$, we write $X'$ for its
projection in $P':=P/R$.

We define $\delta_X \in \{1,2,\ldots,\infty\}$ as the minimal degree of
a polynomial in $I_X \setminus K[P] \cdot I_{X'}$; this is
$\infty$ if $I_X = K[P] \cdot I_{X'}$. 

For $X,Y$ subsets of $P$ we write $X > Y$ if either $X' \supsetneq Y'$ or
else $X'=Y'$ but $\delta_{X'}>\delta_{Y'}$. Since, by the outer induction
assumption, $P'$ is Noetherian, this is a well-founded partial order on
subsets of $P$.
To prove that a given subset $X \subseteq P$ is Noetherian, we may
therefore assume that all subsets $Y \subseteq P$ with $Y<X$ are
Noetherian; this is the inner induction hypothesis.

Now if $\delta_X=\infty$, then any proper subset $Y$ of $X$ satisfies
$Y<X$, so we are done. We are therefore left with the case where $\delta_X
\in \ZZ_{\geq 1}$. 

Let $f \in I_X \setminus (K[P] \cdot I_{X'})$ be an element of degree
$\delta_X$. By the embedding theorem, there exists an element $h \in
K[P(U)] \setminus I_{X}(U)$ of degree $<\delta_X$ such that $(\Sh_U
X)[1/h] \to (\Sh_U P)/R$ is an injective map. Since $(\Sh_U P)/R \prec
P$, $(\Sh_U X)[1/h]$ is Noetherian by the outer induction hypothesis.

Define $Y$ as the subset of $X$ defined by the vanishing of
$h$. Explicitly,
\[ Y(V):=\{p \in X(V) \mid \forall \phi \in \Hom_{\Vec}(V,U):
h(P(\phi)p)=0\}. \]
Let $Y' \subseteq X'$ be the projection of $Y$ in $P/R$. If
$Y' \subsetneq X'$, then $Y<X$ and hence $Y$ is Noetherian
by the inner induction hypothesis. If $Y'=X'$, then $h \in
I_Y(U) \setminus (K[P(U)] \cdot I_{Y'}(U))$, and hence $\delta_Y \leq
\deg(h) < \delta_X$. So then, too, $Y<X$, and $Y$ is
Noetherian by the induction hypothesis. 

Now consider a chain 
\[ X \supseteq X_1 \supseteq X_2 \supseteq \ldots \]
of subsets. By the above two paragraphs, from some point on
both the chain $(X_i \cap Y)_i$ and the chain $((\Sh_U
X_i)[1/h])_i$ have stabilised. We claim that then also the chain
$(X_i)_i$ has stabilised. 

Indeed, take $p \in X_i(W)$. If $p \in X_i(W) \cap Y(W)$, then also $p \in
X_{i+1}(W) \cap Y(W)$ by the first chain, and we are done. If not, then
let $\phi:W \to U$ be a linear map such that $h(P(\phi)p) \neq
0$. Let $\iota:W \to U \oplus W$ be the embedding $w \mapsto
(\phi(w),w)$. Then we find that 
\begin{align*} &P(\iota)p \in X_i(U \oplus W)[1/h]
=(\Sh_U X_i)(W)[1/h]\\
&=(\Sh_U X_{i+1})(W)[1/h]
\subseteq X_{i+1}(U \oplus W). 
\end{align*}
Now if $\rho:U \oplus W \to W$ is the projection, then we
find that $p=P(\rho)P(\iota)p \in P(\rho)X_{i+1}(U \oplus
W)=X_{i+1}(W)$, as desired. 
\end{proof}

\subsection{Proof of the embedding theorem}

\begin{proof}
Recall that $P$ has degree $d>0$, $X \subseteq P$ is a subset,
$R$ an irreducible subrepresentaion of $P_{>d-1}$, $\pi:P \to P/R$
is the projection, $X'=\pi(X)$, $X \neq \pi^{-1}(X')$, and $f \in
I_X(V) \setminus (K[P(V)] \cdot I_{X'}(V))$. Assume that $f$ has
degree $\delta$. Recall from Lemma~\ref{lm:Dual} that $V^* \mapsto
K[P(V)]_{\leq \delta}$ is a polynomial representation. Furthermore,
this has subrepresentations $V^* \mapsto I_X(V)_{\leq \delta}$ and
$V^* \mapsto (K[P(V)] \cdot I_{X'}(V))_\leq \delta$. 

We may assume that $V=K^n$, and without loss of generality, $f$ is a
weight vector. We will act on $f$ with elements $g_{n+1,j}(s)^T$; see
Example~\ref{ex:Contravariant} for an explanation of the transpose.  The
part of degree $b$ in $s$ is then captured by the operator $F_{n+1,j}[b]$.

After acting repeatedly with operators $F_{n+1,j}[b]$ (for increasing
values of $n$ and possibly $j$ and observing that this does
not increase the degree of $f$), we may assume that the image of $f
\in K[P(K^n)]$ in the quotient representation 
\[ I_X(K^n)_{\leq \delta}
/(K[P(K^n)] \cdot I_{X'}(K^n))_{\leq \delta} \] 
is maximally spread
out (see Proposition~\ref{prop:MaxSpread}). After passing to a coordinate subspace,
by Remark~\ref{re:SpreadOut}, this implies that the weight of $f$
is $(1,\ldots,1)$. Moreover, it implies that if we split, for any
$j \in \{1,\ldots,n\}$, $\tilde{f}:=F_{n+1,j} f$ as $\tilde{f}_0 +
\tilde{f}_1$ where $\tilde{f}_0$ has weight $(1,\ldots,0,\ldots,1,1)$ and
$\tilde{f}_1$ has weight $(1,\ldots,q-1,\ldots,1,1)$, then $\tilde{f}_1$
vanishes identically on $X'(K^{n+1})$---indeed, otherwise $\tilde{f}_1$
would be a more spread-out polynomial that vanishes identically on $X$
but not on $X'$.

Choose a basis $\bx$ of $(P'(K^n))^* \subseteq P(K^n)^*$ consisting
of weight vectors, and extend this to a basis $\bx,\by$ of $P(K^n)^*$
of weight vectors. This means that $\by$ maps to a weight basis of
$R(K^n)^*$. Relative to these choices, we can write $f$ as a
reduced polynomial 
\begin{equation}
\label{eq:Decomp1}
f = \sum_\alpha f_\alpha(\bx) \by^\alpha 
\end{equation}
for suitable exponent vectors $\alpha$ and nonzero functions
$f_{\alpha} \in K[P'(K^n)]$. We choose this expression reduced relative to
$I_{X'}(K^n)$ in the following sense: no nonempty subset of the terms of
any $f_{\alpha}$ add up to a polynomial in $I_{X'}(K^n)$.
This implies that no $f_{\alpha}$ is in the ideal of $I_{X'}(K^n)$,
but the requirement is a bit stronger than that.

Let $y_0$ be one of the elements in $\by$ that appears in $f$; we further
choose $y_0$ such that the support in $\{1,\ldots,n\}$ of
its weight is inclusion-wise minimal. Consider the expression (coarser than
\eqref{eq:Decomp1}):
\[ f=f_0(\bx,\by\setminus \{y_0\}) y_0^0 + \dots +
f_{\ab}(\bx,\by \setminus \{y_0\}) y_0^{\ab} \]
where $f_e$ is a reduced polynomial in $\bx$ and the variables in $\by$ except
for $y_0$; and where $f_{\ab} \neq 0$ and ${\ab} \in \{1,\ldots,q-1\}$. Note
that $f_0$ is a weight vector of the same weight as $f$. {\em A priori},
the coefficients $f_e$ with $e>0$ need not be weight vectors, since the
weight monoid $(\{0,\ldots,q-1\}^n,\oplus)$ is not cancellative. However,
all terms in $f_e$ have the same weight up to identifying $0$ and
$q-1$, and upon adding $e$ times the weight of $y_0$
to any of them (using the operation $\oplus$), one obtains
the weight $(1,\ldots,1)$ of $f$.

\begin{lm}
We have ${\ab}=1$, $f_1$ is a weight vector, and after a permutation
$f_1$ has weight $(1^m,0^{n-m})$ and $y_0$ has weight
$(0^{m},1^{n-m})$ for some $m$.
\end{lm}

\begin{proof}
To prove the claim, let $j \in [n]$ be such that the weight
$\chi=(a_1,\ldots,a_n)$ of $y_0$ has $a_j>0$.

We partition the variables $\by$ into three subsets: those whose weight
has an entry $0$ in position $j$ are collected in the tuple $\by_0$;
those with a $1$ there in the tuple $\by_1$; and those with an entry $>1$
there in the tuple $\by_{>1}$.

We construct a weight basis of $P(K^{n+1})^*$ consisting of:
\begin{itemize}
\item $\bx,\by_0,\by_1,$ and $\by_{>1}$; 
\item the tuple $(n+1,j)\by_1$ obtained by applying
$(n+1,j)$ to each variable in $\by_1$; 
\item the tuple $F_{n+1,j}\by_{>1}$
obtained by applying $F_{n+1,j}$ to each variable in the
tuple $\by_{>1}$;
\item weight elements that together with $\bx$ form
a basis of $P(K^{n+1})^*$; and
\item weight elements that along with
$\by_0,\by_1,\by_{>1},(n+1,j)\by_1,F_{n+1,j}\by_{>1}$ project to a weight
basis of $R(K^{n+1})^*$.
\end{itemize}
The only non-obvious thing here is that the elements in
$F_{n+1,j}\by_{>1}$ can be chosen as part of a set mapping
to a basis of $R(K^{n+1})^*$, and this
follows from Lemma~\ref{lm:Indep}. Note that none of the variables
in the tuples $(n+1,j)\by_1$ and $F_{n+1,j}\by_{>1}$ has a $q-1$ on position $j$ of
its weight.

Either $y_0$ belongs to $\by_1$, or to $\by_{>1}$. In the first
case we define $y_1:=(n+1,j)y_0$, and in the second case we
define $y_1:=F_{n+1,j}y_0$. In both cases, $y_1$ is the (nonzero)
weight-$(a_1,\ldots,a_j-1,\ldots,a_n,1)$-component of $F_{n+1,j}y_0$
and one of the chosen variables.  (In the first case, this uses
Lemma~\ref{lm:Swap}.)

Consider
\begin{equation}  \label{eq:Action}
g_{n+1,j}(s)^T f = \sum_{e=0}^{\ab} (f_e(g_{n+1,j}(s)^T
\bx,g_{n+1,j}(s)^T (\by
\setminus \{y_0\})))(g_{n+1,j}(s)^T y_0)^e.
\end{equation}
From the ${\ab}$-th term we get a contribution $f_{\ab} \cdot {\ab} \cdot s \cdot
y_0^{{\ab}-1} \cdot y_1$, which is nonzero because ${\ab}<q-1$ and $q$ is prime.

Rewriting \eqref{eq:Action} as a reduced polynomial in $s, y_0,y_1$
with coefficients that are reduced polynomials in the remaining chosen
variables in $P(K^{n+1})^*$, we claim that $f_{\ab} \cdot {\ab}$ is precisely
the coefficient of $s \cdot y_0^{{\ab}-1} \cdot y_1$. Indeed, $y_0,y_1$
only appear in the terms $y_0=F_{n+1,j}[0] y_0$ and $F_{n+1,j}[1] y_0$
from $g_{n+1,j}(s)^T y_0$ and nowhere in $f_e(g_{n+1,j}(s)^T
\bx, g_{n+1,j}(s)^T (\by\setminus\{y_0\}))$ because:
\begin{itemize}
\item $g_{n+1,j}(s)^T$ maps the coordinates $\bx$ into linear combinations
of $\bx$ and the further chosen variables in $P'(K^{n+1})^*$;

\item $y_0,y_1$ do not appear in $F_{n+1,j}[b] \by$ for $b>1$ for weight
reasons: expressing the elements in the latter tuple on the basis of the
chosen variables, all variables have weights with a $b>1$
at position $n+1$, while $y_0,y_1$ have a $0$ and $1$ there, respectively;

\item $y_0$ does not appear in $F_{n+1,j}[1] y$ for any
variable $y$ in $\by$, again by comparing the weights in position $n+1$;

\item $y_1$ is different from all variables $F_{n+1,j}[1]y$ where $y$
ranges over the variables in $\by_{>1}$ (other than $y_0$, if $y_0$
is in $\by_{>1}$);

\item $y_1$ is different from all variables $(n+1,j)y$
where $y$ ranges over the variables in $\by_1$ (other than
$y_0$, if $y_0$ is in $\by_1$); and indeed

\item $y_1$ does not appear in the weight component $y'=F_{n+1,j}
y - (n+1,j) y$ of any variable $y$ in $\by_1$. Indeed, if
$(a_1',\ldots,1,\ldots,a_n')$ is the weight of $y$, then $y'$ has
weight $(a_1',\ldots,q-1,\ldots,a_n',1)$. But, as remarked earlier, the
variable $y_1$ constructed from $y_0$ does not have a $q-1$ on position
$j$ in its weight.
\end{itemize}

We conclude that, when writing $\tilde{f}=F_{n+1,j}f$ as a polynomial
in the chosen variables, the terms divisible by $y_0^{{\ab}-1} y_1$ are
precisely those in ${\ab} \cdot f_{\ab} \cdot y_0^{{\ab}-1} y_1$.  Now in $f_{\ab}$,
expanded as a reduced polynomial in $\by \setminus \{y_0\}$ with coefficients
that are reduced polynomials in $\bx$, consider any nonzero term $\ell(\bx)
(\by \setminus \{y_0\})^\alpha$. By reducedness of $f$, no nonempty
subset of the terms of $\ell(\bx)$ add up to a polynomial that vanishes
identically on $X'(K^n)$.  Group the terms in $\ell(\bx)$ into two
parts: $\ell(\bx)=\ell_0(\bx) + \ell_1(\bx)$, in such a manner that
$\ell_0(\bx)\cdot (\by \setminus \{y_0\})^\alpha y_0^{{\ab}-1} y_1$ is the
part of $\ell(\bx) (\by\setminus \{y_0\})^{\alpha} y_0^{{\ab}-1} y_1$ that has
weight $(1,\ldots,0,\ldots,1,1)$ and hence is part of $\tilde{f}_0$, and
$\ell_1(\bx)\cdot (\by\setminus \{y_0\})^{\alpha} y_0^{{\ab}-1}$ has weight
$(1,\ldots,q-1,\ldots,1,1)$ and hence is part of $\tilde{f}_1$. Since
$\tilde{f}_1$ vanishes identically on $\pi^{-1}(X')$, we find that
$\ell_1(\bx)$ does, too. Hence, since no nonempty set of
terms in $\ell$ adds up to a polynomial that vanishes on
$X'(K^n)$, we have  $\ell_1(\bx)=0$ and
$\ell(\bx)=\ell_0(\bx)$. It follows that $\ell(\bx)(\by
\setminus \{y_0\}^\alpha) y_0^{{\ab}-1} y_1$
is a weight vector of weight $(1,\ldots,0,\ldots,1,1)$.
Since the term $\ell(\bx)(\by \setminus \{y_0\})^\alpha$ in
$f_{\ab}(\bx,\by \setminus \{y_0\})$ was arbitrary, we find that
$f_{\ab} y_0^{{\ab}-1} y_1$ is a weight vector of weight
$(1,\ldots,0,\ldots,1,1)$. Since the weight of $y_0$ has a
positive entry on position $j$, we find that ${\ab}-1=0$ and all
weights appearing in $f_{\ab}$ have a $0$ on position $j$. 

Now $j$ was arbitrary in the support of the weight of $y_0$, so the
weights appearing in $f_{\ab}$ all have disjoint support from that of
$y_0$. But the only way, in the weight monoid
$\{0,1,\ldots,q-1\}^n$,
to obtain the weight $(1,\ldots,1)$ as a $\oplus$-sum of two weights
with disjoint supports is that, after a permutation, one weight is
$(1^m,0^{n-m})$ and the other weight is $(0^m,1^{n-m})$.  Hence $f_b$
is a weight vector that, after that permutation, has the former weight,
and then $y_0$ has the latter.
\end{proof}

Now we have found that
\[ f=f_0 + f_1 \cdot y_0   \]
where $f_1$ does not vanish identically on $\pi^{-1}(X')$; $f_1$ has weight
$(1^m,0^{n-m})$, $y_0$ has weight $\chi=(0^m,1^{n-m})$, and $f_0$ does
not involve $y_0$. It might be, though, that $f_0$ still
contains other variables $y$ in $\by$ of the same weight
$\chi=(0^{m},1^{n-m})$. Therefore, among the $\by$-variables, 
let $y_0=\hat{y}_1,\hat{y}_2,\ldots,\hat{y}_N$
be those that have weight equal to $\chi$; so $N$ is the multiplicity
of $\chi$ in $R(K^n)^*$. Then the above implies that
\begin{equation} \label{eq:finalf} 
f=\hat{f}_1 \hat{y}_1 + \cdots + \hat{f}_N \hat{y}_N + r 
\end{equation}
where each $\hat{f}_i$ has weight $(1^m,0^{n-m})$ and where the
$\by$-variables that appear in $r$ have weight vectors with at least
one nonzero entry in the first $m$ positions (here we use that $y_0$
had a weight vector of minimal support). Note that $\hat{f}_1=f_0$
does not vanish identically on $X'(K^n)$.

Now set $U:=K^{m}$, $W:=K^{n-m}$, and $h:=\hat{f}_1$.  Note that
$h \in K[P(U)]$, since its weight is $(1^m,0^{n-m})$. Also, $h$ has
lower degree than $f$, as desired, and does not vanish identically on
$\pi^{-1}(X'(U))$. In fact, all $\hat{f}_i$ are polynomials in $K[P(U)]$,
the $\hat{y}_i$ map to coordinates on $R(W)$, and $r$ is a polynomial
in $K[(\Sh_U P)(W)/R(W)]$ because every $\by$-variable in $r$ has at
least one nonzero entry among the first $m$ entries of its weight.

We claim that $(\Sh_U X)[1/h] \to (\Sh_U P)/R$ is injective.  We first
show that this is the case when evaluating at $W=K^{n-m}$. Consider two
points $p,p' \in (\Sh_U X)[1/h](W)$ with the same projection in
$(\Sh_U P)(W)/R(W)$, so that $p-p' \in R(W)$. Then $f$ vanishes at both $p$ and
$p'$ and, in \eqref{eq:finalf}, we have $\hat{f}_i(p)=\hat{f}_i(p')=:c_i
\in K$ for all $i$, as well as $r(p)=r(p')$. Then \eqref{eq:finalf}
shows that
\[ c_1 \hat{y}_1(p) + \cdots + c_N \hat{y}_N(p) =
c_1 \hat{y}_1(p') + \cdots + c_N \hat{y}_N(p').  \]
This can be expressed as $L(p-p')=0$ for a linear form $L \in R(W)^*$
which is nonzero because $c_1=h(p)=h(p') \neq 0$. Now act with an element
$\psi \in \End(K^{n-m})$ on \eqref{eq:finalf}, and then substitute $p$
and $p'$. This yields the identity $L(R(\psi)(p-p'))=0$. Hence we obtain
a nonzero $\End(K^{n-m})$-submodule of linear forms in $R(K^{n-m})^*$
that are zero on $p-p'$. But since $R$, and hence $R^*$, are irreducible
$\End(K^{n-m})$-modules by Remark~\ref{re:Irred}, this means that
$p-p'=0$. The same argument applies when $W$ is replaced by $K^{s}$
for any $s$. This completes the proof of the embedding theorem.
\end{proof}

\subsection{The weak shift theorem}

The embedding theorem can be used to show that the behaviour of
Example~\ref{ex:Matrices} is typical.

\begin{cor}[Weak shift theorem] \label{cor:Shift}
Suppose, as in the embedding theorem, that $K$ is a prime field of
cardinality $q$.  For any nonempty subset $X$ of some  polynomial 
generic representation $P$, there exist a $U \in \Vec$, a nonzero
function $h$ on $X(U)$, and a polynomial $A(n) \in \QQ[n]$ such that
for all $n \in \NN$ the cardinality of $(\Sh_U X)[1/h](K^n)$ equals
$q^{A(n)}$.
\end{cor}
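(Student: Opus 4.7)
The plan is to induct on $P$ along the well-founded order $\prec$ from \S\ref{ssec:Order}. For the base case $\deg P = 0$, the functor $P$ is essentially constant, so $X(V) = X(0)$ is a fixed nonempty subset of the finite set $P(0)$; picking any $p_0 \in X(0)$ and taking $U = 0$ together with $h \in K[P(0)]$ equal to the indicator function of $\{p_0\}$ yields $X[1/h](V) = \{p_0\}$ for all $V$, so the cardinality equals $1 = q^0$. For the inductive step with $d := \deg P \geq 1$, choose an irreducible subrepresentation $R \subseteq P_{>d-1}$, let $\pi\colon P \to P' := P/R$, and set $X' := \pi(X)$. Recall $\delta_X \in \{1,2,\dots,\infty\}$ from the Noetherianity proof above, and split into two cases.

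If $\delta_X = \infty$, then $I_X = K[P] \cdot I_{X'}$ and hence $X = \pi^{-1}(X')$, since over the finite field $K$ every subset equals the common zero set of its vanishing ideal. Because $P' \prec P$, the induction hypothesis applied to $X' \subseteq P'$ yields $U \in \Vec$, a function $h' \in K[P'(U)]$ nonzero on $X'(U)$, and $A_1 \in \QQ[n]$ with $|(\Sh_U X')[1/h'](K^n)| = q^{A_1(n)}$. Pulling back $h'$ via $\pi$ to $h \in K[P(U)]$, the identity $(\Sh_U X)[1/h] = \pi^{-1}((\Sh_U X')[1/h'])$ shows that $\pi$ restricts to a surjection onto $(\Sh_U X')[1/h'](K^n)$ with fibers of size $|R(U \oplus K^n)| = q^{\dim R(U \oplus K^n)}$, and $\dim R(U \oplus K^n)$ is polynomial in $n$ by Theorem~\ref{thm:Kuhn}. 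Hence $|(\Sh_U X)[1/h](K^n)| = q^{\dim R(U \oplus K^n) + A_1(n)}$ as required.

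If instead $\delta_X < \infty$, pick a violator $f \in I_X \setminus K[P] \cdot I_{X'}$ of minimal degree $\delta_X$ and invoke the embedding theorem to obtain $U_0 \in \Vec$ and $h_0 \in K[P(U_0)]$ of degree strictly less than $\delta_X$, with $h_0$ nonzero on $\pi^{-1}(X'(U_0))$ and the projection $(\Sh_{U_0} X)[1/h_0] \to (\Sh_{U_0} P)/R$ injective. The key observation is that $h_0 \notin I_X(U_0)$: otherwise $h_0$ would be a violator of strictly smaller degree than $\delta_X$, contradicting minimality. Hence $(\Sh_{U_0} X)[1/h_0]$ is nonempty, and so is its image $Y \subseteq (\Sh_{U_0} P)/R$ under $\pi$, with the two in bijection. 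Since $(\Sh_{U_0} P)/R \prec P$, induction applied to $Y$ produces $U_1$, $h_1$, and $A_1 \in \QQ[n]$ with $|(\Sh_{U_1} Y)[1/h_1](K^n)| = q^{A_1(n)}$. Because shifting commutes with $\pi$, the bijection shifts to one between $(\Sh_{U_0 \oplus U_1} X)[1/\tilde h_0]$ and $\Sh_{U_1} Y$ (with $\tilde h_0$ the pullback of $h_0$ to $K[P(U_0 \oplus U_1)]$), and then further restricts to a bijection between $(\Sh_{U_0 \oplus U_1} X)[1/(\tilde h_0 \tilde h_1)]$ and $(\Sh_{U_1} Y)[1/h_1]$, where $\tilde h_1 := \pi^* h_1$. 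Setting $U := U_0 \oplus U_1$, $h := \tilde h_0 \tilde h_1$, and $A := A_1$ completes the induction.

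The main obstacle is ensuring the nonemptiness of $Y$ in the second case, since the embedding theorem itself explicitly warns that $h_0$ could vanish identically on $X(U_0)$. This is resolved by the minimality trick: choosing $f$ of degree exactly $\delta_X$ forces the strictly lower-degree $h_0$ to lie outside $I_X(U_0)$. The remaining ingredients---that both $P/R$ and $(\Sh_{U_0}P)/R$ are strictly smaller than $P$ in $\prec$, and that $\dim R(U \oplus K^n)$ is polynomial in $n$---are immediate from \S\ref{ssec:Order} and Theorem~\ref{thm:Kuhn}.
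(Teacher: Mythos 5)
Your proof is correct and follows essentially the same two-case induction as the paper: the base case with degree $0$, then splitting at whether $X = \pi^{-1}(X')$, using the fibre count over $R$ in the trivial case and the embedding theorem plus induction on $(\Sh_{U_0}P)/R$ otherwise. One point deserves highlighting: the paper's proof asserts that the embedding theorem produces an $h_1$ that ``does not vanish on $X$'', whereas the embedding theorem as stated only guarantees non-vanishing on $\pi^{-1}(X')$, and even includes an explicit warning that $h$ may vanish identically on $X(U)$. Your minimality argument---pick $f$ of degree exactly $\delta_X$ so that the embedding theorem's $h_0$, having strictly smaller degree, cannot lie in $I_X(U_0)$ without contradicting minimality (since $h_0 \notin K[P(U_0)]\cdot I_{X'}(U_0)$ is equivalent, over a finite field, to $h_0$ not vanishing on $\pi^{-1}(X'(U_0))$)---is precisely the justification the paper leaves implicit, so your write-up is if anything a bit more complete than the original.
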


In other words, at least in a counting sense, $(\Sh_U X)[1/h](K^n)$
is an affine space of dimension $A(n)$ over $K$. We expect there to
be a stronger version of this theorem, similar to the shift theorem in
\cite{Bik21}, which says that this affine space is functorial in $V$. But
we do not yet know the precise statement of this stronger theorem. Note
that, by applying the weak shift theorem to the subset $Y$ of $X$ defined
by the vanishing of $h$, and so on, we obtain a kind of stratification
of $X$ by finitely many affine spaces. A stronger version of the weak
shift theorem would therefore give deeper insight into the geometric
structure of general restriction-closed properties of tensors.

\begin{proof}[Proof of the weak shift theorem from the embedding theorem]
If $P$ has degree $0$, then $X=X(0)$ is a finite set, and we can choose
$U=0$ and $h$ to vanish on all but one point of $X(0)$, so that $(\Sh_U
X)[1/h]$ is that remaining point.

Now assume that $P$ has degree $d>0$ and that the result holds for
all polynomial representations $Q \prec P$. Let $R$ be an irreducible
subobject of $P_{>d-1}$ and let $X'$ be the projection of $X$ in
$P':=P/R$. 

There are two cases. First assume that $X$ is the preimage of $X'$.
Since $P' \prec P$, by the induction assumption there exists a $U$ and
an $h \in P'(U)$ that does not vanish on $X'(U)$ such that $|(\Sh_{U}
X')[1/h](K^n)|=q^{A(n)}$ for some polynomial $A(n)$.  Now $(\Sh_U
X)[1/h](K^n)$ is the preimage of $(\Sh_U X')[1/h](K^n)$, with fibres
$(\Sh_U R)(K^n)$. The fibre is a finite-dimensional vector space
over $K$ whose dimension is a polynomial $B(n)$. Hence $|(\Sh_{U}
X)[1/h'](K^n)|=q^{A(n)+B(n)}$, as desired.

If $X$ is not the preimage of $X'$, then we have seen that there exists
a $U_1 \in \Vec$, a polynomial $h_1 \in K[P(U_1)]$ that does not vanish on $X$,
and an injection
\[ (\Sh_{U_1} X)[1/h_1] \to ((\Sh_{U_1} P)/R)=:Q. \]
Let $Y$ be the image of this injection. Since $Q \prec P$, there
exist $U_2 \in \Vec$ and $h_2 \in K[Q(U_2)]$ such that $|(\Sh_{U_2}
Y)[1/h_2](K^n)|=q^{A(n)}$ for some polynomial $n$. Now set $U:=U_1 \oplus
U_2$ and $h:=h_1 \cdot h_2$ and we find that 
\[ |(\Sh_U X)[1/h](K^n)|=q^{A(n)}, \]
as desired. 
\end{proof}

\section{Relations to $\FI$ and algorithms} \label{sec:FI}

\subsection{$\FI$ and testing properties via subtensors}
\label{ssec:FI}

We recall from \cite{Church12} that $\FI$ is the category of finite sets
with injections and that an $\FI$-module over $K$ is a functor from $\FI$
to $\Vec$. The central result that we will use is the following.

\begin{thm}[\cite{Church12}] \label{thm:Church}
For any field $K$, any finitely generated $\FI$-module $M$ 
over $K$ is Noetherian in the sense that every
$\FI$-submodule is finitely generated. 
\end{thm}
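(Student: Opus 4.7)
The plan is to reduce to the free (principal projective) $\FI$-modules and then apply a Gröbner-type argument backed by Higman's well-quasi-ordering theorem.

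First, for each $n \in \NN$ recall the free $\FI$-module $M(n)$ defined by $M(n)_S := K[\mathrm{Inj}([n],S)]$, the free $K$-vector space on the set of injections $[n] \hookrightarrow S$, with an $\FI$-morphism $\phi:S \hookrightarrow T$ acting by postcomposition on injections. Any finitely generated $\FI$-module $M$ is a quotient of a finite direct sum $\bigoplus_{i=1}^k M(n_i)$: given generators $v_i \in M_{S_i}$ with $|S_i|=n_i$, the universal property of $M(n_i)$ produces such a surjection. Since Noetherianity passes to quotients, finite direct sums, and submodules, it suffices to prove that each $M(n)$ is Noetherian.

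Second, I would set up a monomial-style order on the basis. For each finite set $S$ equipped with a total order, $M(n)_S$ has a basis $\mathcal{B}_S$ of injections $[n]\hookrightarrow S$, which I identify with ordered $n$-tuples of distinct elements of $S$. I would pick a well-order on $\mathcal{B}_S$ (for instance the lex order on the underlying tuples) that is compatible with $\FI$-morphisms in the sense that if $\phi:S\hookrightarrow T$ is order-preserving (after a choice of orders), then $b_1<b_2$ implies $\phi\cdot b_1<\phi\cdot b_2$. For $0\neq f\in M(n)_S$, let $\mathrm{lt}(f)\in\mathcal{B}_S$ be the largest basis element appearing in $f$ with nonzero coefficient.

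Third, for a submodule $N\subseteq M(n)$, I form the initial set $\mathrm{in}(N):=\{\mathrm{lt}(f): 0\neq f\in N_S,\ S\in\FI\}\subseteq \bigsqcup_S \mathcal{B}_S$. This is closed under the action of $\FI$-morphisms, and a standard Gröbner reduction shows: if $N\subsetneq N'$ then $\mathrm{in}(N)\subsetneq \mathrm{in}(N')$, and moreover any finite generating set for $\mathrm{in}(N)$ as an $\FI$-equivariant subset lifts to a finite generating set of $N$. Thus it suffices to show that every $\FI$-equivariant upward-closed subset of $\bigsqcup_S \mathcal{B}_S$ is generated by finitely many of its elements.

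Finally, I would apply Higman's lemma. The poset $\bigsqcup_S\mathcal{B}_S$, quasi-ordered by the rule that $(s_1,\ldots,s_n)\leq (t_1,\ldots,t_n)$ iff some $\FI$-morphism carries the former tuple to the latter, can be identified with length-$n$ words over $\NN$ ordered by scattered subwords. Higman's lemma shows this is a well-quasi-order, so every $\FI$-equivariant subset has finitely many minimal elements, producing a finite generating set. The main obstacle is verifying the Gröbner reduction step: one must show that the monomial order is strictly decreased by reducing $f\in N_S$ modulo $\FI$-translates of a fixed finite collection, and that the order behaves well under all $\FI$-morphisms (not just those preserving a chosen order on each object). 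Handling the symmetric-group part of the $\FI$-action carefully is precisely the technical content of the Sam--Snowden theory of Gröbner bases for combinatorial categories, and is what makes this reduction nontrivial.
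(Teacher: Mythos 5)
The paper does not prove this theorem; it simply cites it from \cite{Church12}, so there is no internal proof to compare against. What you have sketched is the Gr\"obner-bases approach of Sam and Snowden, which is a genuinely different argument from the one in the cited source: Church--Ellenberg--Farb prove Noetherianity in characteristic $0$ by representation-theoretic methods, and Church--Ellenberg--Farb--Nagpal extend it to arbitrary Noetherian rings using a clever induction involving the shift functor $S \mapsto S \sqcup \{\ast\}$ and stable degree, an argument specific to $\FI$. The Sam--Snowden route you take is more robust and applies uniformly to a large family of combinatorial categories, at the cost of requiring the Gr\"obner and well-quasi-ordering machinery you describe.

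Your sketch has the right architecture, but there is one gap which you correctly flag and should not leave unresolved, because as written a key step is actually false rather than merely unverified. If the quasi-order on $\bigsqcup_S \mathcal{B}_S$ is defined using arbitrary $\FI$-morphisms, it collapses: any two injections $[n]\hookrightarrow[m]$ into the same $[m]$ differ by a permutation of $[m]$, so they become comparable, and the order is essentially ``by degree.'' Correspondingly, $\mathrm{in}(N)$ is \emph{not} closed under the $\FI$-action, since for a non-order-preserving $\phi$ one need not have $\mathrm{lt}(\phi\cdot f)=\phi\cdot\mathrm{lt}(f)$. The repair is to restrict to the subcategory $\mathrm{OI}$ of totally ordered finite sets with order-preserving injections: $M(n)$ restricted to $\mathrm{OI}$ is a direct sum of $n!$ copies of the $\mathrm{OI}$-principal projective (since each injection $[n]\hookrightarrow[m]$ factors uniquely as a permutation of $[n]$ followed by an order-embedding), an $\FI$-submodule is in particular an $\mathrm{OI}$-submodule, and $\mathrm{OI}$-generation implies $\FI$-generation; the lex order is genuinely compatible with $\mathrm{OI}$-morphisms, so the Gr\"obner reduction goes through there. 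Finally, the Higman encoding should be made more careful: encode a sorted $n$-subset $\{s_1<\cdots<s_n\}\subseteq[m]$ by the gap vector $(s_1-1,s_2-s_1-1,\ldots,m-s_n)\in\ZZ_{\geq 0}^{n+1}$ and apply Dickson's lemma, or encode it as a word in $\{0,1,\ldots,n\}^m$ with each of $1,\ldots,n$ appearing once and apply Higman's lemma; ``length-$n$ words over $\NN$ ordered by scattered subwords'' reduces to the product order on $\NN^n$, which is not the poset induced by the $\mathrm{OI}$-action.
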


We now use this to establish item (4) in
Corollary~\ref{cor:Main}.

\begin{thm}
Let $P$ be a polynomial generic representation over the
finite field $K$, and let $X \subseteq P$ be
a subset. Then there exists an $n_0$ such that for any $n \in \ZZ_{\geq
0}$, an element $p \in P(K^n)$ lies in $X(K^n)$ if and only if, for
every subset $S$ of $[n]$ of size $n_0$, the image of $p$ in $P(K^S)$
under the linear map corresponding to the coordinate projection $K^n
\to K^S$ lies in $X(K^S)$.
\end{thm}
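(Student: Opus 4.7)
My plan is to deduce the statement from Noetherianity of finitely generated $\FI$-modules (Theorem~\ref{thm:Church}) combined with Corollary~\ref{cor:Main}(3). I start by applying (3) to obtain a fixed $U=K^d$ such that $p\in X(V)$ if and only if $P(\phi)p\in X(K^d)$ for every linear $\phi:V\to K^d$. Since $K$ is finite, the complement of $X(K^d)$ in $P(K^d)$ is a finite set cut out by a single polynomial $\chi\in K[P(K^d)]$, and I set $e:=\deg\chi$.

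Next I equip $M_e([n]):=K[P(K^n)]_{\leq e}$ with the covariant $\FI$-action where an injection $\iota:[n]\hookrightarrow[n']$ acts by pullback along the coordinate projection $\pi_\iota:K^{n'}\twoheadrightarrow K^n$ onto the $\iota([n])$-coordinates, i.e., $\iota\cdot f := P(\pi_\iota)^\# f$. Via Lemma~\ref{lm:Dual}, this $\FI$-module is exactly the $\FI$-restriction of the polynomial representation $W\mapsto K[P(W^*)]_{\leq e}$ of degree $\leq de$; Theorem~\ref{thm:Kuhn} then expresses it as a subquotient of a finite sum $\bigoplus_i T^{d_i}$, and each $T^{d'}$ is evidently finitely generated as an $\FI$-module (generated at level $[d']$). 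Hence $M_e$ is a finitely generated $\FI$-module. Inside $M_e$ I single out the sub-$\FI$-module $B$ defined by $B([n]):=\mathrm{span}_K\{P(\phi)^\#\chi:\phi\in\Hom_\Vec(K^n,K^d)\}$; the identity $\iota\cdot(P(\phi)^\#\chi)=P(\phi\circ\pi_\iota)^\#\chi$ shows $B$ is $\FI$-stable, so by Theorem~\ref{thm:Church} $B$ is finitely generated, and there exists $n_0\in\NN$ such that $B([n])$ is the $K$-span of elements $\iota\cdot b$ with $\iota:[m]\hookrightarrow[n]$, $m\leq n_0$, $b\in B([m])$.

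I claim this $n_0$ works. The ``only if'' direction is $\Vec$-functoriality of $X$. For the converse, suppose $p\in P(K^n)$ satisfies $p|_S\in X(K^S)$ for every $S\subseteq[n]$ of size $n_0$; restricting further gives $p|_T\in X(K^T)$ for every $|T|\leq n_0$. Applying (3) at $K^T$, this says $\chi(P(\psi)(p|_T))=0$ for every $\psi:K^T\to K^d$, i.e., every generator $P(\psi)^\#\chi\in B([m])$ with $m\leq n_0$ pulls back under $\iota$ to $(P(\psi\circ\pi_\iota)^\#\chi)(p)=\chi(P(\psi)(p|_{\iota([m])}))=0$. By the finite generation of $B$, all of $B([n])$ vanishes at $p$; in particular $(P(\phi)^\#\chi)(p)=0$ for every $\phi:K^n\to K^d$, forcing $p\in X(K^n)$ by (3). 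The finitely many small $n<n_0$ can be absorbed by enlarging $n_0$ to handle them by direct inspection.

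The main technical obstacle will be the finite generation of $M_e$ as an $\FI$-module under the projection-based structure: one must carefully align this structure with the polynomial-representation structure supplied by Lemma~\ref{lm:Dual} via the primal-dual duality (projections on the primal side correspond to inclusions on the dual side), and then verify that the tensor-power building blocks $T^{d'}$ restrict to finitely generated $\FI$-modules in this framework. Once that bookkeeping is in place, Theorem~\ref{thm:Church} does the heavy lifting.
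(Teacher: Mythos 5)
Your proof is correct and follows essentially the same route as the paper: both arguments hinge on making $S \mapsto K[P(K^S)]_{\leq e}$ into a finitely generated $\FI$-module via pullback along coordinate projections, isolating a sub-$\FI$-module whose common vanishing locus on $P(K^n)$ is exactly $X(K^n)$, and invoking Theorem~\ref{thm:Church} to bound the generating level by some $n_0$. The two small deviations from the paper are cosmetic: you obtain the degree bound $e$ from Corollary~\ref{cor:Main}(3) and a single cutting-out function $\chi$ and work with the $\FI$-module $B$ it spans (rather than using that $I_X$ is generated in degree $\leq e$ and working with $S \mapsto I_X(K^S)_{\leq e}$), and you justify finite generation of $M_e$ via Theorem~\ref{thm:Kuhn} and the tensor-power building blocks rather than via the weight-support bound used in the paper; both variations are valid and yield the same conclusion.
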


\begin{proof}
Noetherianity for subsets of $P$ (Theorem~\ref{thm:Noetherianity})
implies that the ideal $I_X$ is finitely generated. In particular, $I_X$
is generated by $(I_X)_{\leq e}$ for some finite degree $e$.  Now consider
the functor $F$ from $\FI$ to $\Vec$ that assigns to any finite set $S$
the space $K[P(K^S)]_{\leq e}$ and to every injection $\iota: S \to T$
the embedding $K[P(K^S)]_{\leq e} \to K[P(K^T)]_{\leq e}$ coming from
the pullback along the linear map $P(K^T) \to P(K^S)$ associated to
$\iota$. Since weights in $K[P(K^S)]_{\leq e}$ have at most $de$ nonzero
entries, where $d=\deg(P)$ (see Lemma~\ref{lm:Dual}), $F$ is generated
by $F([de])$, hence a finitely generated $\FI$-module. By
Theorem~\ref{thm:Church}, 
the $\FI$-submodule $S \mapsto I_X(K^S)$ is also finitely generated,
say by $I_X(K^{n_0})$. This $n_0$ has the desired property.
\end{proof}

\subsection{Algorithms}

As noted in the introduction, the theorem just established implies the
existence of a polynomial-time algorithm for testing the
property $X$.

\subsection{Infinite tensors}

We conclude with a theorem about infinite tensors. 
Let $P:\Vec \to \Vec$ be a polynomial generic representation
over the finite field $K$. Define 
\[ P_{\infty}:=\lim_{\ot n} P(K^n) \]
where the limit is along the projections $P(K^{n+1}) \to P(K^n)$ coming
from the projections $K^{n+1} \to K^n$ forgetting the last entry. In the
case where $P(V)=V^{\otimes d}$, $P_\infty$ can be thought of as the space
of $\NN \times \cdots \times \NN$-tensors (with $d$ factors $\NN$). The
space $P_\infty$ carries the inverse limit of discrete topologies, or,
equivalently, the Zariski topology in which closed subsets are defined
by the vanishing of (possibly infinitely many) functions in the ring
\[ K[P_\infty]:=\lim_{n \to \infty} K[P(K^n)]. \]
On $P_\infty$ and $K[P_\infty]$ acts the monoid $\Pi$ of matrices
that differ from the identity matrix only in finitely many
positions. Let $X \subseteq P$ be a subset in the sense of
Definition~\ref{de:Subset}. Then $X_\infty:=\lim_{\ot n} X(K^n)$ is a
subset of $P_\infty$. 

\begin{prop}
The correspondence that sends $X \subseteq P$ to $X_\infty \subseteq
P_\infty$ is a bijection between subsets of $P$ and closed, $\Pi$-stable
subsets of $P_\infty$.
\end{prop}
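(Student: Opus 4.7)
The plan is to construct the inverse of $X \mapsto X_\infty$ explicitly. Given a closed, $\Pi$-stable $Y \subseteq P_\infty$, set $X(K^n) := \mathrm{proj}_n(Y)$, where $\mathrm{proj}_n : P_\infty \to P(K^n)$ is the canonical projection, and extend $X$ to arbitrary $V \in \Vec$ by transport of structure along any linear isomorphism $V \cong K^n$ (well-defined once functoriality is in hand, using that $\GL_n(K)$ preserves $X(K^n)$). I would then check four things: (a) $X_\infty$ is always closed and $\Pi$-stable; (b) $X$ can be recovered from $X_\infty$ via $X(K^n) = \mathrm{proj}_n(X_\infty)$; (c) the $X$ produced from $Y$ is functorial under arbitrary linear maps and hence is indeed a subset of $P$; and (d) $X_\infty = Y$.

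Steps (a), (b) and (d) are largely formal. For (a), $P_\infty$ carries the inverse-limit topology of finite discrete spaces, so $X_\infty = \bigcap_n \mathrm{proj}_n^{-1}(X(K^n))$ is automatically closed; and any $\pi \in \Pi$ differs from the identity only inside some top-left $N \times N$ block, hence for each $m \geq N$ it restricts to an endomorphism $\pi_m \in \End(K^m)$ compatibly with the projections $K^{m+1} \to K^m$, so functoriality of $X$ under all linear maps gives $\pi \cdot X_\infty \subseteq X_\infty$. For (b), the map $\mathrm{proj}_n : X_\infty \to X(K^n)$ is surjective because any $y \in X(K^n)$ lifts to a compatible system by pushing forward along inclusions $K^n \hookrightarrow K^m$ for $m \geq n$ and pulling back along projections $K^n \to K^m$ for $m \leq n$, all of which preserve membership in $X$ by functoriality. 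For (d), closedness of $Y$ in the profinite topology is equivalent to $Y = \bigcap_n \mathrm{proj}_n^{-1}(\mathrm{proj}_n(Y)) = \bigcap_n \mathrm{proj}_n^{-1}(X(K^n)) = X_\infty$.

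The real work is (c). Given $\phi \in \Hom_\Vec(K^n, K^m)$, I must show $P(\phi)(X(K^n)) \subseteq X(K^m)$. The trick is to encode $\phi$ as the restriction of an element of $\Pi$: set $N := \max(m, n)$ and define $\tilde\phi \in \End(K^N)$ by sending the first $n$ coordinates into the first $m$ coordinates of $K^N$ via $\phi$, and the remaining coordinates to $0$. Then $\tilde\phi$ extended by the identity on coordinates of index $>N$ is an element $\pi \in \Pi$. A direct verification yields the identity $\mathrm{proj}_{N \to m} \circ \tilde\phi = \phi \circ \mathrm{proj}_{N \to n}$ of linear maps $K^N \to K^m$, which gives, for any $\tilde y \in Y$, the equality $(\pi \cdot \tilde y)_m = P(\phi)(\tilde y_n)$. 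Since $\pi \cdot \tilde y \in Y$ by $\Pi$-stability, its image under $\mathrm{proj}_m$ lies in $X(K^m)$, proving functoriality.

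The main obstacle is precisely the construction in (c): realising a linear map $\phi : K^n \to K^m$ between spaces of different dimension as an element of $\Pi$ whose induced action on $P_\infty$ projects down to $P(\phi)$. Once this is in place, well-definedness of the extension of $X$ to arbitrary $V$ follows from the resulting $\GL_n(K)$-equivariance of $X(K^n)$, and the remaining verifications are routine.
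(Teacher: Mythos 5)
Your proposal is correct and follows essentially the same route as the paper's proof. Both arguments hinge on the same key device in step (c): extend a linear map $\phi: K^n \to K^m$ to an endomorphism of $K^{\max(n,m)}$ (zero on the extra coordinates), pad by the identity to get an element of $\Pi$, and then use the commutation relation between this element and the coordinate projections to transfer $\Pi$-stability of $Y$ into functoriality of the reconstructed $X$; the remaining verifications are the same routine checks in both.
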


\begin{proof}
The subset $X_\infty$ is clearly closed and $\Pi$-stable.  Conversely,
let $Y \subseteq P_\infty$ be closed and $\Pi$-stable.  Define $Y_n
\subseteq P(K^n)$ as the image of $Y$ under the projection $P_\infty \to
P_n$, and for any $V \in \Vec$ define $X(V)$ to be the image of $Y_n$
under $P(\phi)$ for any linear isomorphism $K^n \to V$; this
is independent of the choice of $\phi$.

We claim that $X$ is a subset of $P$. Indeed, if $\psi:V \to W$
is any linear map and $\phi:K^n \to V$ a bijection, then we have to
show that $P(\psi)P(\phi)Y_n$ is contained in $P(\phi')Y_{m}$ where
$\phi':K^{n'} \to W$ is a bijection.  Now the map $(\phi')^{-1} \psi
\phi: K^n \to K^{n'}$ extends to a linear map $\alpha:K^m \to K^m$, where
$m:=\max\{n,n'\}$ and we regard $K^n$ and $K^{n'}$ as the subspaces of
$K^m$ where the last $m-n$ and $m-n'$ entries, respectively, are zero. The
map $\alpha$, in turn, can be regarded an element of $\Pi$. Consequently,
$Y$ is invariant under $\alpha$, and therefore so is $Y_m$. This shows
that the map $P((\phi')^{-1} \psi \phi)$ maps $Y_n$ into $Y_{n'}$, so that
$P(\psi)$ maps $X(V)$ into $X(W)$, as desired.

Next we claim that $X_\infty=Y$. Indeed, the fact that $Y$ is closed
means precisely that to test whether $(y_0,y_1,y_2,\ldots)$ lies in $Y$,
it suffices to check whether $y_n$ lies in $Y_n$ for all $y$. And on
the other hand, this is precisely the definition of $X_\infty$.
\end{proof}

Theorem~\ref{thm:Noetherianity} now implies the following.

\begin{cor}
Closed, $\Pi$-stable subsets of $P_\infty$ satisfy the descending chain
condition. Dually, $\Pi$-stable ideals of $K[P_\infty]$ satisfy the
ascending chain condition. \hfill $\square$
\end{cor}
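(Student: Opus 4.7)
The plan is to invoke the bijection $X \leftrightarrow X_\infty$ established in the preceding proposition and apply Theorem~\ref{thm:Noetherianity}. This correspondence is clearly inclusion-preserving in both directions: $X \subseteq X'$ immediately yields $X_\infty \subseteq X'_\infty$, and conversely, since the values $X(K^n)$ can be recovered from $X_\infty$ as its images under the projections $P_\infty \to P(K^n)$, an inclusion $X_\infty \subseteq X'_\infty$ forces $X \subseteq X'$. A descending chain of closed, $\Pi$-stable subsets of $P_\infty$ therefore pulls back to a descending chain of subsets of $P$, which stabilises by Theorem~\ref{thm:Noetherianity}; transporting this back yields the DCC on closed, $\Pi$-stable subsets of $P_\infty$.

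For the dual statement, I would use the order-reversing Galois correspondence between closed subsets of $P_\infty$ and ideals of $K[P_\infty]$. Since $P(K^n)$ is a finite set, $K[P(K^n)]$ is the algebra of all $K$-valued functions on it, a finite product of copies of $K$, in which every ideal is the full vanishing ideal of its zero locus. Interpreting $K[P_\infty] = \operatorname*{colim}_n K[P(K^n)]$, so that every function already lives on some $P(K^n)$, one obtains an inclusion-reversing bijection between closed subsets of $P_\infty$ and ideals of $K[P_\infty]$, which restricts to an inclusion-reversing bijection between closed, $\Pi$-stable subsets and $\Pi$-stable ideals. An ascending chain of $\Pi$-stable ideals thus corresponds to a descending chain of closed, $\Pi$-stable subsets, so the ACC on ideals is immediate from the DCC just proved.

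The only technical point, and the main obstacle, is to verify that every $\Pi$-stable ideal $I \subseteq K[P_\infty]$ equals the vanishing ideal of its zero locus $V(I) \subseteq P_\infty$, and not merely that it is contained in it. This reduces to showing that for each $n$ the image of $V(I)$ under $P_\infty \to P(K^n)$ coincides with the zero locus of $I \cap K[P(K^n)]$. I would establish this by a standard compactness argument: $P_\infty$ is an inverse limit of finite discrete sets and is therefore profinite, so any point in the zero locus of $I \cap K[P(K^n)]$ lifts, via a diagonal/compactness argument on the tower of finite sets, to a compatible sequence in $V(I)$. Once this identification is in place, everything else is a formal consequence of the proposition and of Theorem~\ref{thm:Noetherianity}.
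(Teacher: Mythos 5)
Your proposal is correct and follows the paper's route: the preceding proposition supplies an inclusion-preserving bijection between subsets of $P$ and closed $\Pi$-stable subsets of $P_\infty$, to which Theorem~\ref{thm:Noetherianity} applies directly, and the ACC for $\Pi$-stable ideals then follows via the inclusion-reversing ideal--variety correspondence. The paper condenses your ``Nullstellensatz'' step into the observation (stated in the remark after the corollary) that $f^q=f$ for every $f\in K[P_\infty]$, so every ideal is radical; your compactness argument on the tower of finite stages is the detailed justification of the same identification of $\Pi$-stable ideals with closed $\Pi$-stable subsets.
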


The latter, ring-theoretic Noetherianity is not known in the context
of infinite fields \cite{Draisma17}, except in characteristic zero for
a few degree-two polynomial functors \cite{Nagpal15,Sam22}. However,
in the current context, all ideals are radical, since in $K[P_\infty]$
every element $f$ satisfies the identity $f^q=f$. This implies that the
two statements in the corollary are equivalent.

\bibliographystyle{alpha}
\bibliography{draismajournal,diffeq}

\end{document}